 \def\prf{\noindent{\bf Proof.~~}}
 \def\deprf{\hfill$\Box$\medskip}
 \def\<{\langle}\def\>{\rangle}
 \def\mrm{\mathrm}
 \def\d{\mrm{d}}
\shorttitle{Non-strong Ergodicity of Markov Process} 
\begin{document}

\title{Lyapunov-type Conditions for Non-strong \\ Ergodicity of Markov Processes} 

\bigskip

\authorone[Beijing Normal University]{Yong-Hua Mao and Tao Wang} 

\addressone{School of Mathematical Sciences, Laboratory of Mathematics and Complex Systems, Ministry of Education, Beijing 100875, People’s Republic of China. \par Corresponding author: Tao Wang; E-mail address: wang\_tao@mail.bnu.edu.cn}

\begin{abstract}
We present Lyapunov-type conditions for non-strong ergodicity of Markov processes. Some concrete models are discussed including diffusion processes on Riemannian manifolds and Ornstein-Uhlenbeck processes driven by symmetric $\alpha$-stable processes. For SDE driven by $\alpha$-stable process ($\alpha\in (0,2]$) with polynomial drift, the strong ergodicity or not is independent on $\alpha$.
\end{abstract}

\keywords{Non-strong ergodicity; Lyapunov-type function; Diffusion process; Ornstein-Uhlenbeck process; $\alpha$-stable process.} 
\ams{60J25}{60J60; 60G51} 

\section{Introduction} 
Strong ergodicity of Markov process is an important topic in ergodic theory for stochastic processes. Lyapunov criteria (drift conditions) for strong ergodicity have been discussed to obtain the sufficient conditions for strong ergodicity of Markov processes, see \cite{cmf04}, \cite{czq2014}, \cite{Meyn-tweedie1995}, \cite{MT1993}, \cite{wj2013}.
However, to obtain the necessary condition, we have to prove that Lyapunov functions do not exist. This is usually impractical, so we hope to find a sufficient (Lyapunov) condition for non-strong ergodicty.

It is well known that for right continuous Markov processes, strong ergodicity means the uniform boundedness of first moment of hitting time.
Our technique is based on this criteria and the martingale formulation.

The main results are two-fold. First, the Lyapunov-type conditions for null-recurrence of Markov processes are obtained by using two Lyapunov functions, see \cite{FMM 1995} for Markov chains and \cite{Stramer-Tweedie1994} for general Markov processes. We extend the method to non-strong ergodicity. Second,
motivated by Green function, we also obtain sufficient condition by using a Lyapunov function sequence.

Let $(X_t)_{t\geqslant 0}$ be a Markov process on a Polish space $(E,\mathscr{E})$ with transition function $P_t(x,\cdot)$. A $\sigma$-finite measure $\pi$ on $(E,\mathscr{E})$ is called an \textit{invariant measure} for $(X_t)_{t\geqslant 0}$, if for all $t>0$ and $A\in \mathscr{E}$, $\pi(A)=\int_{E}P_t(x,A)\pi(\d x)$. The process is called \textbf{ergodic}, if there exists a unique invariant probability measure $\pi,$ such that for all $x \in E$,
$\lim\limits_{t\rightarrow\infty}\|P_t(x,\cdot)-\pi(\cdot)\|_{\mathrm{Var}}=0,$ where $\|\cdot\|_{\mathrm{Var}} $ denotes total variation distance.

In this paper, we are interested in the strong ergodicity.
$(X_t)_{t\geqslant 0}$ is said to be \textbf{strongly ergodic (or uniformly ergodic)}, if there exist $\varepsilon>0$, a constant $C>0$, and a invariant probability measure $\pi,$ such that for all $t\geqslant 0$,
$$\sup\limits_{x\in E}\|P_t(x,\cdot)-\pi(\cdot)\|_{\mathrm{Var}}\leqslant C \mathrm{e}^{-\varepsilon t}.$$

From now on, we suppose that $(X_t)_{t\geqslant 0}$ is a time-homogeneous right continuous Markov process and evolves on a probability space $(\Omega, \mathscr{F}, (\mathscr{F}_t)_{t\geqslant0},\mathbb{P})$ with natural filtration $(\mathscr{F}_t)_{t\geqslant0}$. Assume that $(X_t)_{t\geqslant 0}$ is progressive measurable, then for any $A\in \mathscr{E}$, its hitting time $\tau_{A}:=\inf\{t\geqslant 0: X_t\in A\}$ is a stopping time with respect to $(\mathscr{F}_t)_{t\geqslant0}$.

We recall several notions we are going to use in our main results. Let $\{E_n\}_{n=1}^{\infty}\subset \mathscr{E}$ be a sequence of bounded open sets such that
\begin{equation}\label{En}
E_n\uparrow E, \ \bigcup_nE_n=E,
\end{equation}

Let $L$ be the \textit{infinitesimal generator}  of the process $(X_t)_{t\geqslant 0}$ with the domain $D(L)$ is given by
\begin{equation*}
\begin{split}
D(L):=\bigg\{&V : (E, \mathscr{E})\rightarrow (\mathbb{R},\mathscr{B} )\  \text{is measurable:}\ \lim_{h\rightarrow0}\frac{\mathbb{E}_x[V(X_h)]-V(x)}{h}\ \text{exists pointwise,} \\
&\text{and satisfies }\ \lim\limits_{h\rightarrow0}\mathbb{E}_x[LV(X_h)]=LV(x)\bigg\}.\\
\end{split}
\end{equation*}
Following \cite{Meyn-tweedie1995}, we use an enlarged domain of $L$ as follows:
\begin{equation*}
\begin{split}
D_w(L):=\bigg\{&f :  (E, \mathscr{E})\rightarrow (\mathbb{R},\mathscr{B} )\  \text{is measurable, } \ \text{such that}\ f(X_t)-f(X_0)-\int_{0}^{t}Lf(X_s)\d s\\
&  \text{is a local martingale}\bigg\}.\\
\end{split}
\end{equation*}

We say $(L,D_w(L))$ is the \textbf{extended generator} of $(X_t)_{t\geqslant 0}$. According to \cite[p.522]{Meyn-tweedie1993}, $D(L)\cap\{f: Lf \ \text{is locally bounded}\}\subset D_w(L)$.

A measurable function $f: E\rightarrow\mathbb{R}_+$ is called a \textbf{norm-like function (or compact function)}, if $f(x)\rightarrow\infty$ as $x\rightarrow\infty$; it means that the level sets $\{x: f(x)\leqslant r\}$ are precompact for each $r>0$.

Now we state the main results of this paper.

\begin{theorem}\label{inequality-type2}
Assume that $(X_t)_{t\geqslant 0}$ is non-explosive and ergodic. Let $\{E_n\}_{n=1}^{\infty}\subset \mathscr{E}$ be defined in (\ref{En}), and $H\subset E_1$ be a closed set with $\pi(H)>0$. If there exist non-negative norm-like functions $u(x),v(x)\in D_w(L)$, such that

\rm{(a)} $\forall x\notin H, Lu(x)\geqslant -1,$ and $u$ is locally bounded;

\rm{(b)} there exists a constant $d > 0$ such that $Lv(x)\leqslant d \mathbf{1}_H(x) $;

\rm{(c)} $\lim\limits_{n\rightarrow\infty}\sup\limits_{x\notin E_n}u(x)/v(x)=0,$

\noindent then  $(X_t)_{t\geqslant 0}$ is non-strongly ergodic.
\end{theorem}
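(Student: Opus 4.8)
The plan is to prove, via the criterion recalled in the Introduction, that $\sup_{x\in E}\mathbb{E}_x[\tau_H]=\infty$; since strong ergodicity is equivalent to the uniform boundedness of this first hitting moment, this yields non-strong ergodicity. If $\mathbb{P}_x(\tau_H=\infty)>0$ for some $x$ we are done immediately, so we may assume $\tau_H<\infty$ $\mathbb{P}_x$-a.s.\ for all $x$ (this also follows from ergodicity together with $\pi(H)>0$). Fixing $x\notin H$, the target is the lower bound $\mathbb{E}_x[\tau_H]\geqslant u(x)-\sup_{H}u$; since $u$ is norm-like, hence unbounded, letting $x\to\infty$ then gives $\sup_{x}\mathbb{E}_x[\tau_H]=\infty$.

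To obtain this bound I would run the martingale formulation on the stopped times $\rho:=t\wedge\tau_H\wedge R_k$, where $(R_k)$ is a common reducing sequence for the local martingales attached (via $D_w(L)$) to $u$ and to $v$, and then use three facts. First, from (b): before $\tau_H$ the process avoids $H$, where $Lv\leqslant 0$, so the decomposition $v(X_t)-v(X_0)-\int_0^t Lv(X_s)\,\d s$ together with $v\geqslant 0$ shows $v(X_{\rho})\in L^1$ and $\mathbb{E}_x[v(X_{\rho})]\leqslant v(x)$, i.e.\ $v(X_{\cdot\wedge\tau_H})$ is a supermartingale. Second, from (c): choosing $n$ with $u\leqslant\varepsilon v$ off $E_n$ and setting $K_\varepsilon:=\sup_{\overline{E_n}}u<\infty$ (finite by local boundedness of $u$) and $b:=\sup_H u<\infty$, one splits $\mathbb{E}_x[u(X_{\rho})]$ over $\{\tau_H\leqslant t\wedge R_k\}$ (where $X_\rho\in H$, so $u(X_\rho)\leqslant b$) and its complement (where $X_\rho=X_{t\wedge R_k}$ and $u(X_\rho)\leqslant\varepsilon v(X_\rho)+K_\varepsilon$); invoking the first fact this gives $\mathbb{E}_x[u(X_{\rho})]\leqslant b+\varepsilon v(x)+K_\varepsilon\,\mathbb{P}_x(t\wedge R_k<\tau_H)$, and in particular $u(X_{\rho})\in L^1$. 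Third, from (a): since $Lu\geqslant-1$ off $H$, the decomposition for $u$ (now genuinely integrable) yields $\mathbb{E}_x[u(X_{\rho})]=u(x)+\mathbb{E}_x\!\int_0^{\rho}Lu(X_s)\,\d s\geqslant u(x)-\mathbb{E}_x[\rho]$, i.e.\ $u(X_{\cdot\wedge\tau_H})+(\cdot\wedge\tau_H)$ is a submartingale.

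Combining the second and third facts gives $\mathbb{E}_x[\rho]\geqslant u(x)-b-\varepsilon v(x)-K_\varepsilon\,\mathbb{P}_x(t\wedge R_k<\tau_H)$, and then I would pass to the limit in the order $k\to\infty$ (so $R_k\uparrow\infty$: dominated convergence on both sides, with $\mathbb{P}_x(t\wedge R_k<\tau_H)\to\mathbb{P}_x(t<\tau_H)$), then $t\to\infty$ (monotone convergence gives $\mathbb{E}_x[t\wedge\tau_H]\uparrow\mathbb{E}_x[\tau_H]$, while $\mathbb{P}_x(t<\tau_H)\downarrow\mathbb{P}_x(\tau_H=\infty)=0$ annihilates the $K_\varepsilon$ term), and finally $\varepsilon\to0$ with $x$ (hence $v(x)$) fixed. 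This leaves $\mathbb{E}_x[\tau_H]\geqslant u(x)-b$, as wanted.

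The heart of the matter — and the reason two Lyapunov functions plus the linking condition (c) are needed rather than $u$ alone — is the uniform control of $\mathbb{E}_x[u(X_{\rho})]$: with $u$ unbounded, the process can make long excursions far from $H$ (where $u$ is large) before returning, so $u(X_{\cdot\wedge\tau_H})$ need not be uniformly integrable and the limit $t\to\infty$ in the submartingale inequality cannot be taken directly. Condition (b) turns $v(X_{\cdot\wedge\tau_H})$ into an $L^1$-bounded supermartingale, and (c) transfers that bound to $u$, the price being only the term $K_\varepsilon\,\mathbb{P}_x(t<\tau_H)\to0$ together with an error $\varepsilon v(x)$ that is removed last. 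The remaining steps are routine bookkeeping: verifying that the stopped processes are bona fide (sub/super)martingales with the asserted expectations (this is where non-explosion and $u,v\in D_w(L)$ are used) and justifying the three interchanges of limit and expectation.
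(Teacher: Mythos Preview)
Your proof is correct and follows essentially the same strategy as the paper: both establish $\mathbb{E}_x[\tau_H]\geqslant u(x)-\sup_H u$ by combining a submartingale inequality for $u$ with a supermartingale bound for $v$ (from (b)) transferred to $u$ via (c). The only difference is cosmetic: the paper localizes with the exit times $\tau_{E_n^c}$ from the exhausting sets---applying the ratio bound in (c) directly at the exit point $X_{\tau_{E_n^c}}\in E_n^c$ and then sending $n\to\infty$---whereas you localize with an abstract reducing sequence $R_k$ and use the global estimate $u\leqslant\varepsilon v+K_\varepsilon$ before sending $k\to\infty$, $t\to\infty$, $\varepsilon\to 0$.
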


\begin{remark}
(1) Specially, let $E'$ be a countable state space, $H$ be a non-empty finite subset of $E'$, and $(X_t)_{t\geqslant 0}$ be a $Q$-process with an irreducible regular $Q$-matrix $Q=(q_{ij})_{i,j\in E'}$, its generator $L$ is defined as
$$(Lu)_i=\sum_{j\in E'}q_{ij}u_j, \ \text{for}\ u=(u_i)_{i\in E'}.$$
According to \cite{Meyn-tweedie1993},
$\mathcal{V}:=\{u=(u_i)_{i\in E'}: \ \text{for any }\ i\in E', \ u_i \ \text{is finite}\}\subset D_w(L).$
Applying Theorem \ref{inequality-type2} to $Q$-process, we have that if there exist  $u,\ v\in \mathcal{V}$, such that

\rm{(a')} $\forall i\notin H, \sum_{j\in\mathbb{Z}_+}q_{ij}u_j\geqslant -1,$ and $\sum_{j\in\mathbb{Z}_+}q_{ij}v_j\leqslant 0 $;

\rm{(b')} $\varlimsup\limits_{i\rightarrow\infty}u_i=\varlimsup\limits_{i\rightarrow\infty}v_i=\infty,$ and $\varlimsup\limits_{i\rightarrow\infty}u_i/v_i=0,$

\noindent then the $Q$-process is non-strongly ergodic.

(2) A different criteria for non-strong ergodicity of Markov chain is obtained in \cite[Theorem 1.15(2)]{MP2004} (they call non-strong ergodicity as ``implosion does not occur"). The criteria is proved by a semimartingale approach for Markov chain (see \cite[p.2396]{MP2004}) and the idea is different from our method. For Markov chains, these two methods are both applicable.
\end{remark}

Next, we investigate the first moment of hitting time by the Green function.
Let $D$ be a domain, 
and define $P_t^D(x, A):=\mathbb{P}_x[X_t\in A,\tau_{D^c}>t].$ Assume that $P_t^D(x, \cdot)$ exists density $p_t^D(x, y)$ with respect to invariant measure $\pi(\d x)$. Then we define the Green function on $D$ as
$$G_D(x,y)=\int_{0}^{\infty}p_t^D(x, y)\d t.$$
If $\mathbb{E}_x \tau_{D^c}<\infty$ (the condition is ensured by the existence of stationary distribution), then
\begin{equation}\label{green on D}
\begin{split}
u_D(x)&:=\int_DG_D(x,y)\pi(dy)=\int_D \int_{0}^{\infty}p_t^D(x, y)\d t\pi(dy)\\
&=\int_{0}^{\infty}\mathbb{P}_x(\tau_{D^c}>t)\d t=\mathbb{E}_x\tau_{D^c}.
\end{split}
\end{equation}
Let $H$ and $\{E_n\}_{n=1}^{\infty}\subset \mathscr{E}$ be as in Theorem \ref{inequality-type2}. Assume that $(X_t)_{t\geqslant 0}$ is ergodic,  then the Poisson equation
\begin{equation}\label{Poi}
\left\{
\begin{split}
&Lu_n(x)= -1, & \text{in}\ E_n\setminus H;\\
&u_n(x)=0,& \text{in}  \ E_n^c\cup  H\\
\end{split}
\right.
\end{equation}
has finite solution $u_n(x)=\mathbb{E}_x[\tau_H\wedge \tau_{E_n^c}]=\int_{E_n\setminus H}G_{E_n\setminus H}(x,y)\pi(dy)$.
Hence the solution of Poisson equation (\ref{Poi}) could be represented by Green function.
If $\sup\limits_{x\notin H}\varlimsup\limits_{n\rightarrow\infty}u_n(x)=\infty$, then
$$\sup\limits_{x\notin H}\mathbb{E}_x\tau_H=\sup\limits_{x\notin H}\varlimsup\limits_{n\rightarrow\infty}\mathbb{E}_x[\tau_H\wedge \tau_{E_n^c}]=\sup\limits_{x\notin H}\varlimsup\limits_{n\rightarrow\infty}u_n(x)=\infty,$$
therefore by \cite[Lemma 2.1]{mao2002},
the process is non-strongly ergodic.

Motivated by this fact, we have the following result:
\begin{theorem}\label{Green criteria}
Assume that $(X_t)_{t\geqslant 0}$ is non-explosive and ergodic. Let $H$ and $\{E_n\}_{n=1}^{\infty}\subset \mathscr{E}$ be as Theorem \ref{inequality-type2}. If for each $n\geqslant1$, there exists a non-negative function $u_n(x)\in D_w(L)$, such that

\rm{(a)} $\forall x\in E_n\setminus H,$ $Lu_n(x)\geqslant -1;$

\rm{(b)} $u_n(x)=0$, \text{in}  $ E_n^c\cup  H$, and $u_n$ is bounded in  $E_n\setminus H;$

\rm{(c)} $\sup\limits_{x\notin H}\varlimsup\limits_{n\rightarrow\infty}u_n(x)=\infty,$

\noindent then $(X_t)_{t\geqslant 0}$ is non-strongly ergodic.
\end{theorem}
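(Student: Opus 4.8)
The plan is to show that the functions $u_n$ control the hitting-time moments $\mathbb{E}_x[\tau_H\wedge\tau_{E_n^c}]$ from below, and then invoke the characterization of strong ergodicity via uniform boundedness of $\sup_x\mathbb{E}_x\tau_H$ (as used after (\ref{green on D}) through \cite[Lemma 2.1]{mao2002}). First I would fix $n$ and work inside $D_n:=E_n\setminus H$. Since $u_n\in D_w(L)$, the process $M_t:=u_n(X_t)-u_n(X_0)-\int_0^t Lu_n(X_s)\,\d s$ is a local martingale. I would introduce a localizing sequence of stopping times $\sigma_k\uparrow\infty$ together with the stopping time $\tau:=\tau_H\wedge\tau_{E_n^c}$, and apply optional stopping at $t\wedge\tau\wedge\sigma_k$ under $\mathbb{P}_x$ for $x\in D_n$. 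Using (a), $Lu_n(X_s)\geqslant -1$ for $s<\tau$ (the path stays in $D_n$ before $\tau$), so
\[
\mathbb{E}_x\big[u_n(X_{t\wedge\tau\wedge\sigma_k})\big]-u_n(x)=\mathbb{E}_x\!\int_0^{t\wedge\tau\wedge\sigma_k}\!Lu_n(X_s)\,\d s\ \geqslant\ -\,\mathbb{E}_x\big[t\wedge\tau\wedge\sigma_k\big]\ \geqslant\ -\,\mathbb{E}_x[\tau].
\]
Here $\mathbb{E}_x[\tau]=\mathbb{E}_x[\tau_H\wedge\tau_{E_n^c}]<\infty$ by ergodicity and non-explosiveness (this is exactly $u_n^{\mathrm{Poisson}}(x)$ from the discussion preceding the theorem). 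Rearranging gives $\mathbb{E}_x[u_n(X_{t\wedge\tau\wedge\sigma_k})]\leqslant u_n(x)+\mathbb{E}_x[\tau]$.

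The next step is to pass to the limit. By (b), $u_n$ is nonnegative and bounded on $D_n$, and vanishes on $E_n^c\cup H$; hence $u_n(X_{t\wedge\tau\wedge\sigma_k})$ is uniformly bounded, so dominated convergence lets me send $k\to\infty$ and then $t\to\infty$. On $\{\tau<\infty\}$ (which has full measure), $X_\tau\in E_n^c\cup H$, so $u_n(X_\tau)=0$, and the left side tends to $0$. This yields $0\leqslant u_n(x)+\mathbb{E}_x[\tau]$... wait—that inequality is vacuous; I need the reverse direction. So instead I run optional stopping to get the lower bound on $\mathbb{E}_x[\tau]$: from the displayed inequality, $\mathbb{E}_x[\tau]\geqslant \mathbb{E}_x[t\wedge\tau\wedge\sigma_k]\geqslant u_n(x)-\mathbb{E}_x[u_n(X_{t\wedge\tau\wedge\sigma_k})]$, and letting $k,t\to\infty$ the last term vanishes, giving the clean bound
\[
\mathbb{E}_x[\tau_H\wedge\tau_{E_n^c}]\ \geqslant\ u_n(x),\qquad x\in E_n\setminus H.
\]
(For $x\in E_n^c\cup H$ the bound $\mathbb{E}_x[\tau_H\wedge\tau_{E_n^c}]\geqslant 0=u_n(x)$ is trivial.)

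Finally, since $\tau_H\wedge\tau_{E_n^c}\leqslant\tau_H$ always, we get $\mathbb{E}_x\tau_H\geqslant\mathbb{E}_x[\tau_H\wedge\tau_{E_n^c}]\geqslant u_n(x)$ for every $n$ and every $x\notin H$. Taking $\varlimsup_{n\to\infty}$ and then $\sup_{x\notin H}$, hypothesis (c) gives $\sup_{x\notin H}\mathbb{E}_x\tau_H\geqslant\sup_{x\notin H}\varlimsup_{n\to\infty}u_n(x)=\infty$. Then the criterion invoked in the paragraph preceding the theorem (right-continuous strong ergodicity forces $\sup_x\mathbb{E}_x\tau_H<\infty$, by \cite[Lemma 2.1]{mao2002}) shows $(X_t)_{t\geqslant0}$ cannot be strongly ergodic. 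I expect the main technical obstacle to be the justification of optional stopping / the limit interchange: one must choose the localizing sequence of the local martingale compatibly with $\tau$ and verify, using only local boundedness of $Lu_n$ on the precompact set $E_n$ together with boundedness of $u_n$ on $D_n$, that the boundary term $\mathbb{E}_x[u_n(X_{t\wedge\tau\wedge\sigma_k})]\to 0$; non-explosiveness is what guarantees $\sigma_k\wedge\tau\to\tau$ a.s. and prevents mass escaping to infinity in finite time.
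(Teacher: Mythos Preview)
Your proposal is correct and follows essentially the same route as the paper's own proof: apply the martingale/Dynkin formula at the stopped time $t\wedge\tau_H\wedge\tau_{E_n^c}$, use (a) to bound the integral term below by $-\mathbb{E}_x[t\wedge\tau_H\wedge\tau_{E_n^c}]$, use (b) to kill the boundary term, arrive at $\mathbb{E}_x[\tau_H\wedge\tau_{E_n^c}]\geqslant u_n(x)$, let $n\to\infty$, and conclude via \cite[Lemma 2.1]{mao2002}. The only cosmetic differences are that you insert an explicit localizing sequence $\sigma_k$ and invoke dominated convergence directly from the boundedness of $u_n$, whereas the paper writes the optional-stopping identity without $\sigma_k$ and handles the $t\to\infty$ limit by splitting on $\{t\gtrless\tau_H\wedge\tau_{E_n^c}\}$ and using ergodicity (as in its proof of Theorem~\ref{inequality-type2}); these are equivalent justifications of the same step.
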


As a first step of our applications, let us check three simple examples. The following Corollaries 1, 2 and 3  are known and our new methodology is applied to reproduce these results as motivation.
\begin{corollary}[Diffusion process on half line]\label{diffusion process on half line}
Let $L=a(x)\frac{\mathrm{d}^2}{\mathrm{d}x^2}+b(x)\frac{\mathrm{d}}{\mathrm{d}x}$, $a(x)>0$ and $a,b$ be continuous on $(0,\infty)$.
Define $C(x)=\int_{1}^{x}(b(y)/a(y))\mathrm{d}y$ and $m(\d x)={a(x)}^{-1}\mathrm{e}^{C(x)}\d x$. Suppose the $L$-diffusion process $(X_t)_{t\geqslant 0}$ on $[0,\infty)$ with reflecting boundary at 0 is non-explosive and ergodic, i.e.
$$\int_{0}^{\infty}\mathrm{e}^{-C(y)}\left(\int_{0}^{y}\frac{\mathrm{e}^{C(z)}}{a(z)}\mathrm{d}z\right)\mathrm{d}y=\infty,\ \text{and} \ m([0,\infty))<\infty.$$

According to \cite[Theorem 2.1]{mao2002}, the process is strongly ergodic if and only if $$\delta:=\int_{0}^{\infty}\mathrm{e}^{-C(y)}\left(\int_{y}^{\infty}\frac{\mathrm{e}^{C(z)}}{a(z)}\mathrm{d}z\right)\mathrm{d}y<\infty.$$
\end{corollary}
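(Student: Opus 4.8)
The plan is to characterize strong ergodicity through the quantity $\sup_x\mathbb{E}_x\tau_H$ for a fixed compact set $H$ containing the reflecting endpoint, to compute it explicitly from the scale function $C$ and speed measure $m$, and — in the non-strongly ergodic case — to realize it as the limit of the Poisson solutions $u_n$ so that Theorem \ref{Green criteria} applies.

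First I would fix $H=[0,1]$ and $E_n=[0,n+1)$ (so $H\subset E_1$), noting $\pi(H)>0$ since $m$ has a strictly positive density. Because strong ergodicity is equivalent to $\sup_x\mathbb{E}_x\tau_H<\infty$ (the hitting-time criterion recalled in the Introduction, \cite[Lemma 2.1]{mao2002}), it suffices to evaluate this supremum. As $0$ is reflecting and paths are continuous, for $x>1$ we have $\tau_H=\tau_{\{1\}}$, and $g(x):=\mathbb{E}_x\tau_H$ is the minimal nonnegative solution of $Lg=-1$ on $(1,\infty)$ with $g(1)=0$. Rewriting the equation as $(\e^{C}g')'=-\e^{C}/a$, using that $m([0,\infty))<\infty$ forces $\int^{\infty}\e^{C(z)}/a(z)\,\d z<\infty$, and using that a hitting time is nondecreasing in the starting point so $g'\geqslant 0$, the integration constant is pinned down to give $\e^{C(x)}g'(x)=\int_x^{\infty}\e^{C(z)}/a(z)\,\d z$, whence
\[
g(x)=\int_1^x\e^{-C(y)}\Big(\int_y^{\infty}\frac{\e^{C(z)}}{a(z)}\,\d z\Big)\,\d y ,\qquad x\geqslant 1 .
\]
Since $g$ is nondecreasing, $\sup_x\mathbb{E}_x\tau_H=\lim_{x\to\infty}g(x)=\int_1^{\infty}\e^{-C(y)}\big(\int_y^{\infty}\e^{C(z)}/a(z)\,\d z\big)\,\d y$, which differs from $\delta$ only by $\int_0^1\e^{-C(y)}\big(\int_y^{\infty}\e^{C(z)}/a(z)\,\d z\big)\,\d y\leqslant m([0,\infty))\int_0^1\e^{-C(y)}\,\d y<\infty$; hence $\sup_x\mathbb{E}_x\tau_H<\infty\iff\delta<\infty$, giving the stated equivalence.

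To recover the non-strongly ergodic case $\delta=\infty$ from Theorem \ref{Green criteria} — the intended illustration — I would, for each $n$, take $u_n$ to be the solution of the Poisson problem (\ref{Poi}) on $E_n\setminus H$. By the discussion preceding Theorem \ref{Green criteria}, $u_n(x)=\mathbb{E}_x[\tau_H\wedge\tau_{E_n^c}]$, which is nonnegative, lies in $D_w(L)$, vanishes on $E_n^c\cup H$, and (being the solution of a Dirichlet problem on a bounded interval, dominated by $g$) is bounded on $E_n\setminus H$; so hypotheses (a), (b) hold. For (c): non-explosion together with path-continuity gives $\tau_{E_n^c}=\tau_{\{n+1\}}\uparrow\infty$ a.s., and recurrence gives $\tau_H<\infty$ a.s., so $\tau_H\wedge\tau_{E_n^c}\uparrow\tau_H$ and monotone convergence yields $u_n(x)\uparrow\mathbb{E}_x\tau_H=g(x)$ for each $x\notin H$. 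Therefore $\sup_{x\notin H}\varlimsup_{n}u_n(x)=\sup_{x>1}g(x)=\infty$, and Theorem \ref{Green criteria} gives non-strong ergodicity.

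The routine parts are the explicit solution of the linear ODE and the monotone-convergence step. The main obstacle is verifying that the explicit formula really is $\mathbb{E}_x\tau_H$ — i.e. selecting the correct integration constant as the minimal nonnegative solution and treating the reflecting boundary at $0$ correctly (a standard optional-stopping/Dynkin argument together with Feller's scale–speed description) — and the almost-sure divergence $\tau_{\{n+1\}}\to\infty$, which rests on non-explosion.
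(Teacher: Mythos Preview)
Your argument is correct, but it diverges from the paper in two respects.

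First, the paper only reproves the \emph{necessity} direction ($\delta=\infty\Rightarrow$ non-strongly ergodic), since the stated equivalence is already attributed to \cite[Theorem 2.1]{mao2002}; the point of the corollary is to illustrate the new Theorems~\ref{inequality-type2} and~\ref{Green criteria}. You instead rederive both directions by computing $\mathbb{E}_x\tau_H$ explicitly via the ODE, which is fine but is essentially the classical scale--speed computation from \cite{mao2002} rather than a showcase of the paper's methods. Second, the paper gives \emph{two} illustrations: one via Theorem~\ref{inequality-type2} with the explicit pair $u(x)=\int_0^x\e^{-C(y)}\big(\int_y^\infty a(z)^{-1}\e^{C(z)}\d z\big)\d y$, $v(x)=\int_0^x\e^{-C(y)}\d y$ (so $Lu=-1$, $Lv=0$, $u/v\to 0$), and one via Theorem~\ref{Green criteria} with $u_n$ written as the explicit Green-kernel integral $u_n(x)=\int_1^n G_{1,n}(x,y)\,m(\d y)$, whose $n\to\infty$ limit is computed directly by calculus. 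Your Theorem~\ref{Green criteria} argument takes the same $u_n$ but identifies it as $\mathbb{E}_x[\tau_H\wedge\tau_{E_n^c}]$ and passes to the limit by monotone convergence; this is shorter, but it leans on the hitting-time formula $g(x)=\mathbb{E}_x\tau_H$ you already derived, so it does not quite stand alone as an illustration of the Lyapunov-sequence criterion. The paper's version has the virtue of exhibiting the Green function explicitly and checking condition~(c) without ever invoking $\mathbb{E}_x\tau_H$.
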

\prf
Here we only consider the necessity. In order to apply Theorem \ref{inequality-type2}, let $u(x)=\int_{0}^{x}\mathrm{e}^{-C(y)}\left(\int_{y}^{\infty}{a(z)}^{-1}\mathrm{e}^{C(z)}\mathrm{d}z\right)\mathrm{d}y$ and $v(x)=\int_{0}^{x}\mathrm{e}^{-C(y)}\d y$. Then $Lu=-1$, $Lv=0$, and
$$\lim\limits_{x\rightarrow\infty}\frac{u(x)}{v(x)}=\lim\limits_{x\rightarrow\infty}\int_{x}^{\infty}\frac{\mathrm{e}^{C(y)}}{a(y)}\mathrm{d}y=0.$$
By Theorem \ref{inequality-type2}, we see that the process is non-strongly ergodic.

Now we apply Theorem \ref{Green criteria}. Let $E_n=(0,n), \ H=(0,1)$. The Green function on $E_n\setminus H$
$$G_{1,n}(x,y)=\frac{(s(n)-s(x\vee y))(s(x\wedge y)-s(1))}{s(n)-s(1)},$$
where $s(x)=\int_{0}^{x}\mathrm{e}^{-C(y)} \d y$. Let $u_n(x)=\int_{1}^{n}G_{1,n}(x,y)m(\d y)$.
For $x>1,$
\[
\begin{split}
\lim\limits_{n\rightarrow\infty}u_n(x)&=\int_{1}^{x}\mathrm{e}^{-C(t)}\left[\int_{t}^{x}\frac{1}{a(l)}\mathrm{e}^{C(l)}dl\right]dt+(s(x)-s(1))\int_{x}^{\infty}\frac{1}{a(l)}\mathrm{e}^{C(l)}dl\\
     &=\int_{1}^{x}\mathrm{e}^{-C(t)}\left[\int_{t}^{\infty}\frac{1}{a(l)}\mathrm{e}^{C(l)}dl\right]dt.
\end{split}
\]
Hence when $\delta=\infty,$ according to Theorem \ref{Green criteria}, the process is non-strongly ergodic. \deprf
\begin{corollary}[Single birth processes]
Let $Q=(q_{ij})_{i,j\in\mathbb{Z}_+}$ is a single-birth $Q$-matrix, i.e. $q_{i,i+1}>0, q_{i,i+j}=0$ for $i\in \mathbb{Z}_+, j\geqslant2.$ Assume that $Q$ is totally stable and conservative: $q_i:=-q_{ii}=\sum_{j\neq i}q_{ij}<\infty.$
Define
$$q_n^{(k)}=\sum_{j=0}^{k}q_{nj}, \ \  F_n^{(n)}=1, \ \ F_n^{(i)}=\frac{1}{q_{n,n+1}}\sum_{k=i}^{n-1}q_n^{(k)}F_k^{(i)}, \ \ 0\leqslant i<n,$$
$$d_0=1, \ \ d_n=\frac{1}{q_{n,n+1}}\left(1+\sum_{k=0}^{n-1}q_n^{(k)}d_k\right), \ \ n\geqslant 1, \ \ d:=\sup_{k\in\mathbb{Z}_+}\frac{\sum_{n=0}^{k}d_n}{\sum_{n=0}^{k}F_n^{(0)}}.$$

Assume that the $Q$-process is ergodic, i.e. $d<\infty.$ According to \cite[Theorem 1.1]{zhang2001},
the $Q$-process is strongly ergodic if and only if $$\sup_{k\geqslant 0}\sum_{n=0}^{k}(F_n^{(0)}d-d_n)<\infty.$$
\end{corollary}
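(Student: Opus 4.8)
The plan is to prove only the necessity, i.e.\ that $\sup_{k\geqslant 0}\sum_{n=0}^{k}\big(F_n^{(0)}d-d_n\big)=\infty$ forces non-strong ergodicity, since the sufficiency is \cite[Theorem 1.1]{zhang2001}. I would apply the $Q$-process form of Theorem \ref{inequality-type2} recorded in Remark (1). Take $H=\{0\}$ and $E_n=\{0,1,\dots,n\}$: then $H\subset E_1$, $H$ is finite hence closed, and $\pi(\{0\})>0$ because an ergodic chain is irreducible so every state carries positive stationary mass; moreover $d<\infty$ guarantees the $Q$-process is regular and non-explosive, so the structural hypotheses hold. The whole argument then rests on the single-birth identity: by conservativeness ($q_i=q_{i,i+1}+\sum_{j=0}^{i-1}q_{ij}$) and Abel summation, for every real sequence $(y_i)$ and every $i\geqslant 1$,
\[
(Ly)_i=\sum_{j}q_{ij}y_j=q_{i,i+1}(y_{i+1}-y_i)-\sum_{l=0}^{i-1}q_i^{(l)}(y_{l+1}-y_l).
\]

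Using this identity I would check the two candidate Lyapunov functions
\[
v_n:=\sum_{k=0}^{n-1}F_k^{(0)},\qquad u_n:=\sum_{k=0}^{n-1}\big(d\,F_k^{(0)}-d_k\big)=d\,v_n-\sum_{k=0}^{n-1}d_k .
\]
Since $v_{n+1}-v_n=F_n^{(0)}$, the identity together with the defining recursion of $F_n^{(0)}$ gives $(Lv)_i=q_{i,i+1}F_i^{(0)}-\sum_{l=0}^{i-1}q_i^{(l)}F_l^{(0)}=0$ for $i\geqslant 1$. Likewise $w_n:=\sum_{k=0}^{n-1}d_k$ has $w_{n+1}-w_n=d_n$, and the recursion $q_{i,i+1}d_i=1+\sum_{l=0}^{i-1}q_i^{(l)}d_l$ ($i\geqslant1$) yields $(Lw)_i\equiv 1$ off the origin, whence $(Lu)_i=d\cdot 0-1=-1$ for $i\geqslant 1$. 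Thus condition (a$'$) of Remark (1) holds: $\sum_j q_{ij}u_j=-1\geqslant-1$ and $\sum_j q_{ij}v_j=0\leqslant0$ for $i\notin H$; also $u_n\geqslant0$ because $d=\sup_k\big(\sum_{n\leqslant k}d_n\big)\big/\big(\sum_{n\leqslant k}F_n^{(0)}\big)$ forces $d\,v_n\geqslant w_n$, and $u,v$ are finite-valued, hence in $D_w(L)$.

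For condition (b$'$): ergodicity implies recurrence, which for single-birth processes is equivalent to $\sum_k F_k^{(0)}=\infty$, so $v_n\uparrow\infty$; the hypothesis $\sup_k\sum_{n=0}^{k}\big(dF_n^{(0)}-d_n\big)=\infty$ is precisely $\sup_n u_n=\infty$, hence $\varlimsup_n u_n=\infty$; and from $u_n/v_n=d-w_n/v_n$ together with the standard single-birth fact (cf.\ \cite{zhang2001}) that the ratio sequence $\sum_{n=0}^{k}d_n\big/\sum_{n=0}^{k}F_n^{(0)}$ increases to its supremum $d$, we get $u_n/v_n\to0$, so $\varlimsup_n u_n/v_n=0$. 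With (a$'$) and (b$'$) in hand, Remark (1) applied to Theorem \ref{inequality-type2} gives non-strong ergodicity. (One could equally run Theorem \ref{Green criteria}: truncating $u_n$ to $E_n$ gives the solution of $(Lu_n)_i=-1$ on $E_n\setminus H$, and $\varlimsup_n u_n(i)=\mathbb{E}_i\tau_0$, so condition (c) there reduces to the same supremum being infinite.)

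The only non-elementary ingredient, and the real obstacle, is the monotone convergence $\sum_{n=0}^{k}d_n\big/\sum_{n=0}^{k}F_n^{(0)}\uparrow d$: this is what upgrades ``$u/v$ bounded'' to ``$u/v\to0$'', i.e.\ exactly what makes condition (c) hold, and it is where the special recursive structure of single-birth $Q$-matrices enters; everything else is the single-birth identity plus bookkeeping. I would invoke it from the single-birth theory rather than reprove it here.
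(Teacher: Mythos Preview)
Your proposal is correct and follows essentially the same route as the paper: the same choice $H=\{0\}$, the same Lyapunov pair $u_k=\sum_{n<k}(dF_n^{(0)}-d_n)$ and $v_k=\sum_{n<k}F_n^{(0)}$, the same verification that $(Lu)_i=-1$ and $(Lv)_i=0$ off $H$, and the same appeal to \cite{zhang2001} for the key step $u_k/v_k\to 0$. The only cosmetic difference is that the paper quotes $\lim_k d_k/F_k^{(0)}=d$ from \cite[Remarks~2.3(ii)]{zhang2001} (whence the partial-sum ratio limit follows by Stolz--Ces\`aro), whereas you invoke monotonicity of the partial-sum ratio directly; either form yields $u_k/v_k\to 0$.
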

\prf Here we only consider the necessity.
Let $H=\{0\}$ and
$$u_k=\sum_{n=0}^{k-1}(F_n^{(0)}d-d_n), \ \ \ v_k=\sum_{n=0}^{k-1}F_n^{(0)}.$$
It is well known that if $d<\infty$, then $u,v$ satisfy that $\sum_{j\geqslant 0}q_{ij}u_j=-1, \ \sum_{j\geqslant 0}q_{ij}v_j=0$, for $i\notin H$.
By \cite[Remarks 2.3(ii)]{zhang2001},  $\lim\limits_{k\rightarrow\infty}{d_k}/{F_k^{(0)}}=d,$ hence
$\lim\limits_{k\rightarrow\infty}u_k/v_k=0.$ Therefore, the non-strong ergodicity follows from
Remark 1(1).
\deprf

\begin{corollary}
Let $Q=(q_{ij})_{i,j\geqslant 0}$ is a birth-death $Q$-matrix with
\begin{equation}
q_{ij}=\left\{
\begin{split}
&p_i, & \ j=i+1;\\
&q_i,&   \ j=i-1;\\
&q_0,&   \ j=i=0;\\
&0,&   \ \text{otherwise}.\\
\end{split}
\right.
\end{equation}

(1) If $p_i=q_i=i^{\alpha}$ with $\alpha>1$, then the process is ergodic; furthermore, if $\alpha\leqslant2$, then the process is not strongly ergodic.

(2) If $p_i\equiv p>0,$ $q_i\equiv q>p,$ then the process is ergodic but not strongly
ergodic.

(3) If $p_i\equiv p,\ q_i=i^\alpha$ with $\alpha\in (0,1]$, then the process is non-strongly ergodic.
\end{corollary}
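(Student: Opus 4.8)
\medskip
\noindent\textbf{Proof strategy.}\quad
The plan is to apply Remark 1(1) (the $Q$-process form of Theorem~\ref{inequality-type2}) in each of the three cases with the finite set $H=\{0\}$. Set $\mu_0=1$ and $\mu_n=p_0p_1\cdots p_{n-1}/(q_1q_2\cdots q_n)$ for $n\geqslant1$, and take
$$v_i=\sum_{k=0}^{i-1}\frac{1}{\mu_kp_k},\qquad u_i=\sum_{k=0}^{i-1}\frac{1}{\mu_kp_k}\sum_{j=k+1}^{\infty}\mu_j\ \bigl(=\mathbb{E}_i\tau_0\bigr).$$
Using $(Lf)_i=p_i(f_{i+1}-f_i)+q_i(f_{i-1}-f_i)$ for $i\geqslant1$ and the identity $\mu_{k-1}p_{k-1}=\mu_kq_k$, one verifies directly that $(Lv)_i=0$ and $(Lu)_i=-1$ for all $i\geqslant1$, so hypothesis (a$'$) of Remark 1(1) holds automatically. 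It then remains to check the standing hypotheses of Theorem~\ref{inequality-type2} --- non-explosion and ergodicity, which for birth--death chains reduce to $\sum_n\mu_n<\infty$ together with $\sum_n(\mu_np_n)^{-1}=\infty$ --- and hypothesis (b$'$), namely $u_i\uparrow\infty$, $v_i\uparrow\infty$, $u_i/v_i\to0$. The divergence $\sum_n(\mu_np_n)^{-1}=\infty$ already gives $v_i\uparrow\infty$, so the real content is $u_i\uparrow\infty$ (equivalently $S:=\sum_{k\geqslant0}(\mu_kp_k)^{-1}\sum_{j>k}\mu_j=\infty$) and $u_i/v_i\to0$.

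Everything now reduces to the growth of $\mu_n$, computed case by case. In (1), $p_i=q_i=i^\alpha$ gives $\mu_n\asymp n^{-\alpha}$, hence $(\mu_np_n)^{-1}$ is bounded away from $0$ and $\infty$, $v_i\asymp i$, and $\sum_{j>k}\mu_j\asymp k^{1-\alpha}$; thus $u_i\asymp\sum_{k<i}k^{1-\alpha}$, which diverges precisely when $1-\alpha\geqslant-1$, i.e.\ $\alpha\leqslant2$, while $u_i/v_i\asymp i^{1-\alpha}\to0$ since $\alpha>1$. In (2), $\mu_n=(p/q)^n$ with $p/q<1$, so $v_i\asymp(q/p)^i$, $(\mu_kp_k)^{-1}\sum_{j>k}\mu_j\equiv(q-p)^{-1}$, hence $u_i=i/(q-p)\uparrow\infty$ and $u_i/v_i\asymp i(p/q)^i\to0$. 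In (3), $\mu_n=p^n/(n!)^\alpha$; since $\mu_{j+1}/\mu_j=p/(j+1)^\alpha\to0$ the tail $\sum_{j>k}\mu_j$ is comparable to its leading term $\mu_{k+1}$, so $(\mu_kp_k)^{-1}\sum_{j>k}\mu_j\asymp(k+1)^{-\alpha}$ and $u_i\asymp\sum_{k<i}(k+1)^{-\alpha}\uparrow\infty$ for $\alpha\leqslant1$, whereas $v_i$ is dominated by its last term $((i-1)!)^\alpha/p^i$, which grows super-exponentially, so $u_i/v_i\to0$ at once; in (2) and (3) the bounds $\sum_n\mu_n<\infty$, $\sum_n(\mu_np_n)^{-1}=\infty$ are immediate, giving ergodicity and non-explosion. (Alternatively one may avoid the explicit $u$ and use, as in Corollaries 1 and 2, the scale function for $v$ together with $u_i=c\,i^{2-\alpha}$, or $c\log(1+i)$ when $\alpha=2$, in (1), $u_i=i/(q-p)$ in (2), and $u_i=\sum_{m=1}^{i}m^{-\alpha}$ in (3), with $c$ chosen small enough that $Lu\geqslant-1$; or invoke Theorem~\ref{Green criteria} with $E_n=\{0,1,\dots,n\}$ and $u_n(i)=\mathbb{E}_i[\tau_0\wedge\tau_{E_n^c}]$, whose increasing limit is $u_i$.)

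I expect the main obstacle to be the estimate $S=\infty$, i.e.\ pinning down the exact order of the tail sums $\sum_{j>k}\mu_j$: in (1) these converge only because $\alpha>1$ and then produce the borderline exponent $k^{1-\alpha}$, and the divergence of $\sum_k(\mu_kp_k)^{-1}\sum_{j>k}\mu_j$ is exactly what separates the non-strongly-ergodic range $\alpha\leqslant2$ from the strongly ergodic range $\alpha>2$. One must also take care that the chosen $u$ is genuinely norm-like rather than merely non-negative: when the chain is in fact strongly ergodic (e.g.\ (1) with $\alpha>2$) the function $u=\mathbb{E}_\cdot\tau_0$ is bounded and (b$'$) fails, in agreement with Theorem~\ref{inequality-type2}.
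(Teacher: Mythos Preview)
Your proposal is correct and takes a genuinely different route from the paper. The paper uses the same ad hoc test functions $u_i=\log(i+1)$ and $v_i=i$ in all three cases and verifies by direct computation that $(Lv)_i\leqslant 0$ and $(Lu)_i\geqslant -c$ outside a suitable finite set $H$ (which must be enlarged beyond $\{0\}$ in cases (1) and (3) to absorb the bad small-$i$ behaviour). You instead take the exact solutions $v$ equal to the scale function and $u=\mathbb{E}_\cdot\tau_0$, so that $(Lv)_i=0$ and $(Lu)_i=-1$ hold identically for $i\geqslant1$ with $H=\{0\}$, and transfer all the work to the asymptotics of the explicit sums. Your approach is more systematic---it is essentially Corollary~\ref{diffusion process on half line} in discrete form and makes transparent that the dividing line in (1) is exactly the divergence of $S=\sum_k(\mu_kp_k)^{-1}\sum_{j>k}\mu_j$, recovering the known criterion for birth--death chains---while the paper's choice buys simplicity: no tail estimates are needed, and one sees directly that $\log(i+1)$ and $i$ suffice across all three models. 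Your verification of ergodicity and non-explosion via $\sum_n\mu_n<\infty$ and $\sum_n(\mu_np_n)^{-1}=\infty$ is also more explicit than the paper's, which checks ergodicity in (1) and (2) but leaves it implicit in (3).
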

\prf
(1) If $p_i=q_i=i^{\alpha}$, then the invariant measure $\mu=\sum_{i=0}^{\infty}p_0i^{-\alpha},$ hence it is ergodic iff $\alpha>1.$ Furthermore, it is strongly ergodic iff $S:=\sum_{i=0}^{\infty}\sum_{j=i+1}^{\infty}j^{-\alpha}<\infty$, i.e. $\alpha>2$ (see Theorem 3.1 in \cite{mao2002}). Specially, we can use Theorem 1 to check the non-strong ergodicity. Let $u_i=\log(i+1)$ and $v_i=i.$ Then $\lim\limits_{k\rightarrow\infty}u_k/v_k=0,$ and
$$(Lv)_i=\sum_{j\geqslant 0}q_{ij}v_j=0,\  \ (Lu)_i=\sum_{j\geqslant 0}q_{ij}u_j=i^\alpha\log[1-1/(i+1)^2].$$
If $\alpha\leqslant 2,$ then we can choose $N$ large enough, such that for $i>N$, $(Lu)_i\geqslant-1.$ Thus the condition in Remark 1(1) is satisfied, the process is non-strongly ergodic.

(2) If $p_i\equiv p,\ q_i\equiv q>p$, then  the invariant measure $\mu=q^{-1}p_0\sum_{i=0}^{\infty}\left({p}/{q}\right)^{n-1}<\infty,$ hence it is ergodic.

Let $H=\{0\},$ $u_i=\log(i+1)$ and $v_i=i.$ Then $\lim\limits_{k\rightarrow\infty}u_k/v_k=0,$
$$(Lv)_i=\sum_{j\geqslant 0}q_{ij}v_j=p-q<0,$$
and
$$ (Lu)_i=\sum_{j\geqslant 0}q_{ij}u_j=p\log\left(\frac{i+2}{i+1}\right)-q\log\left(\frac{i+1}{i}\right)\geqslant-q\log 2, \ \text{for all} \ i\notin H.$$
Thus the condition in Remark 1(1) is satisfied, the process is non-strongly ergodic.

(3) Let $u_i=\log(i+1)$, $v_i=i,$ and $H=\{0,\cdots,  p^{1/\alpha}\vee 1\}.$ Then $\lim\limits_{k\rightarrow\infty}u_k/v_k=0,$ and for $i\notin H,$
$$(Lv)_i=\sum_{j\geqslant 0}q_{ij}v_j=p-i^\alpha<0,$$
and
$$ (Lu)_i=\sum_{j\geqslant 0}q_{ij}u_j=p\log\left(\frac{i+2}{i+1}\right)-i^{\alpha}\log\left(\frac{i+1}{i}\right)\geqslant-i^{\alpha-1}\geqslant-1.$$
So the condition in Remark 1(1) is satisfied, the process is non-strongly ergodic.
\deprf

The remainder of this paper is organized as follows. In section \ref{General criteria for non-strong ergodicity}, we give proofs of Theorem \ref{inequality-type2} and Theorem \ref{Green criteria}. In section \ref{Diffusion processes} we present a new sufficient condition for non-strong ergodicity of diffusion process on Riemannian manifold and give some examples. In section \ref{Ornstein-Uhlenbeck processes driven by stable noises}, we prove the non-strong ergodicity of Ornstein-Uhlenbeck processes driven by symmetric $\alpha$-stable noises.

\section{General criteria for non-strong ergodicity}\label{General criteria for non-strong ergodicity}
In this section we prove Theorem \ref{inequality-type2} and \ref{Green criteria}. For this, we need the following result.

\begin{lemma}\cite[Lemma 2.1]{mao2002}\label{necessary condition by hitting time}
Let $(X_t)_{t\geqslant 0}$ be a right continuous Markov process on $(E,\mathscr{E})$. If $(X_t)_{t\geqslant 0}$ is strongly ergodic, then we have $\sup\limits_{x\in E}\mathbb{E}_x\tau_A<\infty,$ for any closed set $A\subset E$ with $\pi(A)>0.$
\end{lemma}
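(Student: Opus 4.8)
The plan is to convert the uniform‑in‑$x$ exponential mixing bound, which only controls the one‑dimensional marginal $P_t(x,\cdot)$, into a uniform geometric decay of the tail of the hitting time $\tau_A$, and then integrate that tail. Throughout I use that closedness of $A$ together with right continuity and progressive measurability of $(X_t)_{t\geqslant0}$ make $\tau_A$ a genuine stopping time, and that $\{X_s\in A\}\subseteq\{\tau_A\leqslant s\}$ for every $s\geqslant0$.

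First I would fix a single time horizon $t_0$ at which every starting point already has a definite chance of sitting inside $A$. Since $\|P_t(x,\cdot)-\pi\|_{\mathrm{Var}}\leqslant Ce^{-\varepsilon t}$ uniformly in $x$, we have $P_t(x,A)\geqslant \pi(A)-Ce^{-\varepsilon t}$ for all $x$; choosing $t_0$ so large that $Ce^{-\varepsilon t_0}\leqslant \pi(A)/2$ gives, with $\delta:=\pi(A)/2>0$,
$$\inf_{x\in E}P_{t_0}(x,A)\geqslant \delta,\qquad\text{equivalently}\qquad \sup_{x\in E}\mathbb{P}_x(X_{t_0}\notin A)\leqslant 1-\delta.$$
This is the only place the strong‑ergodicity hypothesis enters, and it is exactly the bridge from the metric estimate to a path statement: because $\{X_{t_0}\in A\}\subseteq\{\tau_A\leqslant t_0\}$, every point has probability at least $\delta$ of having hit $A$ by time $t_0$.

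Next I would iterate this over disjoint blocks of length $t_0$ via the Markov property. The key inclusion is $\{\tau_A>(n+1)t_0\}\subseteq\{\tau_A>nt_0\}\cap\{X_{(n+1)t_0}\notin A\}$, since not hitting $A$ before time $(n+1)t_0$ forces in particular $X_{(n+1)t_0}\notin A$. Conditioning on $\mathscr{F}_{nt_0}$ and applying the Markov property,
$$\mathbb{E}_x\big[\mathbf{1}_{\{X_{(n+1)t_0}\notin A\}}\mid\mathscr{F}_{nt_0}\big]=\mathbb{P}_{X_{nt_0}}(X_{t_0}\notin A)\leqslant 1-\delta,$$
so that $\mathbb{P}_x(\tau_A>(n+1)t_0)\leqslant(1-\delta)\,\mathbb{P}_x(\tau_A>nt_0)$ for every $x$, and induction yields the uniform geometric bound $\mathbb{P}_x(\tau_A>nt_0)\leqslant(1-\delta)^n$. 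Finally, since $t\mapsto\mathbb{P}_x(\tau_A>t)$ is nonincreasing,
$$\mathbb{E}_x\tau_A=\int_0^\infty\mathbb{P}_x(\tau_A>t)\,\d t\leqslant t_0\sum_{n=0}^\infty\mathbb{P}_x(\tau_A>nt_0)\leqslant t_0\sum_{n=0}^\infty(1-\delta)^n=\frac{t_0}{\delta},$$
whose right‑hand side is independent of $x$, giving $\sup_{x\in E}\mathbb{E}_x\tau_A\leqslant t_0/\delta<\infty$.

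The step I expect to require the most care is the iteration: one must check that $\{\tau_A>nt_0\}$ is $\mathscr{F}_{nt_0}$‑measurable (guaranteed by progressive measurability of $X$ with right continuity and closedness of $A$) and that the Markov property is invoked on a bounded measurable functional of the post‑$nt_0$ trajectory, here the single‑coordinate indicator $\mathbf{1}_{\{X_{(n+1)t_0}\notin A\}}$. Reducing the per‑block control to the one‑time event $\{X_{(n+1)t_0}\notin A\}$, rather than to the full first‑passage event within the block, is precisely what keeps this use of the Markov property elementary; the remainder is the standard geometric‑trials estimate.
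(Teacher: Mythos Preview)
Your argument is correct and is the standard one: uniform total-variation convergence gives a uniform lower bound $\inf_x P_{t_0}(x,A)\geqslant\delta>0$, the inclusion $\{\tau_A>(n+1)t_0\}\subseteq\{\tau_A>nt_0\}\cap\{X_{(n+1)t_0}\notin A\}$ together with the Markov property at the deterministic times $nt_0$ yields the geometric tail $\mathbb{P}_x(\tau_A>nt_0)\leqslant(1-\delta)^n$, and integrating the tail gives $\sup_x\mathbb{E}_x\tau_A\leqslant t_0/\delta$. Each step is justified under the stated hypotheses (right continuity and progressive measurability make $\tau_A$ a stopping time and $\{\tau_A>nt_0\}\in\mathscr{F}_{nt_0}$, and the conditioning is on a bounded one-coordinate functional).

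There is nothing to compare against in the present paper: the lemma is quoted from \cite[Lemma~2.1]{mao2002} and used without proof here. Your write-up is essentially the proof one finds in that reference (and in Meyn--Tweedie style treatments), so it is both correct and aligned with the intended argument.
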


\noindent\textbf{\textbf{Proof of Theorem \ref{inequality-type2}.}}\
First, according to the condition (a) and (b), we have that for $t\geqslant 0$ and $x\in E_n\setminus H$,
\begin{equation}\label{martingale 1}
\mathbb{E}_x[u(X_{ t\wedge\tau_{H}\wedge \tau_{E_n^c}})]-u(x)=\mathbb{E}_x\left[\int_{0}^{t\wedge\tau_{H}\wedge \tau_{E_n^c}}Lu(X_s)\d s\right]\geqslant -\mathbb{E}_x[ t\wedge\tau_{H}\wedge \tau_{E_n^c}],
\end{equation}
and
$$\mathbb{E}_x[v(X_{ t\wedge\tau_{H}\wedge \tau_{E_n^c}})]-v(x)=\mathbb{E}_x\left[\int_{0}^{t\wedge\tau_{H}\wedge \tau_{E_n^c}}Lv(X_s)\d s\right]\leqslant 0.$$
According to Fatou's lemma, when $t\rightarrow \infty$, we have for $x\in E_n\setminus H$,
$$\mathbb{E}_x[v(X_{\tau_{H}\wedge \tau_{E_n^c}})]\leqslant \varliminf_{t\rightarrow\infty}\mathbb{E}_x[v(X_{t\wedge \tau_{H}\wedge \tau_{E_n^c}})]\leqslant v(x).$$
Next, note that
on $\{t<\tau_{H}\wedge \tau_{E_n^c}\}$, $X_t\in E_n\setminus H,$ hence by the local boundedness of $u$,
$$u(X_t)\mathbf{1}_{\{t<\tau_{H}\wedge \tau_{E_n^c}\}}\leqslant \sup_{x\in \overline{E_n\setminus H}}u(x):=M<\infty.$$
Thus by ergodicity,
\begin{equation}
0\leqslant\lim_{t\rightarrow\infty}\mathbb{E}_x[u(X_t)\mathbf{1}_{\{t<\tau_{H}\wedge \tau_{E_n^c}\}}]\leqslant M \lim_{t\rightarrow\infty}\mathbb{P}_x[\tau_{H}\wedge \tau_{E_n^c}>t]=0.
\end{equation}
Therefore,
\begin{equation}\label{martingale 2}
\begin{split}
\lim_{t\rightarrow\infty}\mathbb{E}_x[u(X_{ t\wedge\tau_{H}\wedge \tau_{E_n^c}})]&=\lim_{t\rightarrow\infty}\mathbb{E}_x[u(X_{ \tau_{H}\wedge \tau_{E_n^c}})\mathbf{1}_{\{t\geqslant\tau_{H}\wedge \tau_{E_n^c}\}}]+\lim_{t\rightarrow\infty}\mathbb{E}_x[u(X_t)\mathbf{1}_{\{t<\tau_{H}\wedge \tau_{E_n^c}\}}]  \\
     & \leqslant\mathbb{E}_x[u(X_{ \tau_{H}\wedge \tau_{E_n^c}})].
\end{split}
\end{equation}
Now by combining (\ref{martingale 1}) and (\ref{martingale 2}), we have
\begin{equation}\label{martingale 3}
\mathbb{E}_x[\tau_{H}\wedge \tau_{E_n^c}]\geqslant u(x)-\mathbb{E}_x[u(X_{ \tau_{H}\wedge \tau_{E_n^c}})].
\end{equation}
Since
$$\mathbb{E}_x[u(X_{\tau_{H}\wedge \tau_{E_n^c}})]=\mathbb{E}_x[u(X_{\tau_{H}})\mathbf{1}_{\{\tau_{H}<\tau_{E_n^c}\}}]+\mathbb{E}_x[u(X_{\tau_{E_n^c}})\mathbf{1}_{\{\tau_{H}>\tau_{E_n^c}\}}]$$
and $\mathbb{E}_x[v(X_{\tau_{E_n^c}})\mathbf{1}_{\{\tau_{H}>\tau_{E_n^c}\}}]\leqslant v(x)$, it follows from (\ref{martingale 3}) that
\[
\begin{split}
 \mathbb{E}_x[\tau_{H}\wedge \tau_{E_n^c}]&\geqslant u(x)-\mathbb{E}_x[u(X_{\tau_{H}})]-\mathbb{E}_x\left[\frac{u(X_{\tau_{E_n^c}})}{v(X_{\tau_{E_n^c}})}v(X_{\tau_{E_n^c}})\mathbf{1}_{\{\tau_{H}>\tau_{E_n^c}\}}\right]\\
&\geqslant u(x)-\mathbb{E}_x[u(X_{\tau_{H}})]-\left(\sup_{x\notin E_n}\frac{u(x)}{v(x)}\right)v(x).\\
\end{split}
\]
Let $n\rightarrow\infty$ to derive $\sup\limits_{x\notin H}\mathbb{E}_x[\tau_{H}]\geqslant \sup\limits_{x\notin H}u(x)-\sup\limits_{x\in H}u(x)=\infty.$
Therefore, the process is non-strongly ergodic.  \deprf

\noindent\textbf{\textbf{Proof of Theorem \ref{Green criteria}.}}\
First, according to the condition (a) in Theorem \ref{Green criteria}, for $t\geqslant 0$ and $x\in E_n\setminus H$,
$$\mathbb{E}_x[u_n(X_{ t\wedge\tau_{H}\wedge \tau_{E_n^c}})]-u_n(x)=\mathbb{E}_x\left[\int_{0}^{t\wedge\tau_{H}\wedge \tau_{E_n^c}}Lu_n(X_s)\d s\right]\geqslant -\mathbb{E}_x[ t\wedge\tau_{H}\wedge \tau_{E_n^c}].$$
By similar argument as in the proof of Theorem \ref{inequality-type2}, we get
\begin{equation}
\begin{split}
\lim_{t\rightarrow\infty}\mathbb{E}_x[u_n(X_{ t\wedge\tau_{H}\wedge \tau_{E_n^c}})]&\leqslant\mathbb{E}_x[u_n(X_{ \tau_{H}\wedge \tau_{E_n^c}})]+\lim_{t\rightarrow\infty}\mathbb{E}_x[u_n(X_t)\mathbf{1}_{\{t<\tau_{H}\wedge \tau_{E_n^c}\}}]  \\
     & \leqslant\mathbb{E}_x[u_n(X_{ \tau_{H}\wedge \tau_{E_n^c}})]=0,
\end{split}
\end{equation}
where we use the condition (b) in the last term.
So for $x\in E_n\setminus H$,
$$\mathbb{E}_x[\tau_{H}\wedge \tau_{E_n^c}]\geqslant u_n(x).$$
Next, by letting $n\rightarrow\infty$, we have for all $x\notin H$, $\mathbb{E}_x[\tau_{H}]\geqslant \varlimsup\limits_{n\rightarrow\infty}u_n(x)$.
Therefore,
$$\sup_{x\notin H}\mathbb{E}_x[\tau_{H}]\geqslant \sup_{x\notin H}\varlimsup_{n\rightarrow\infty}u_n(x)=\infty.$$
This proves that $(X_t)_{t\geqslant 0}$ is non-strongly ergodic.
\deprf

\section{Diffusion processes}\label{Diffusion processes}
Let $M$ be a complete connected Riemannian manifold, $(X_t)_{t\geqslant 0}$ be a non-explosive and ergodic diffusion process on $M$ with generator $L=\Delta+Z$, where $Z$ is a $C^1$ vector field.
Assume that the generalized martingale problem for $L$ is well-posed, i.e. for $f\in C^2(M)$ and $Lf$ locally bounded,
$f(X_t)-f(X_0)-\int_{0}^{t}Lf(X_s)\d s $ is a local martingale with respect to $\mathbb{P}_x$, for any $ x\in M$. On a proper local chart of $M$, the genetor $L$ has the form
\begin{equation}\label{local chart}
L=\sum_{i,j=1}^{n}a_{ij}(x)\frac{\partial^2}{\partial x_i\partial x_j}+\sum_{i=1}^{n}b_i(x)\frac{\partial}{\partial x_i}.
\end{equation}
Specially, if $M=\mathbb{R}^n$, then the form (\ref{local chart}) is a global representation. If $a$ is positive definite, symmetric, and  $a, b$ are locally bounded, then  the martingale problem for $L$ is well-posed (see \cite[Theorem 1.13.1]{Pinsky1995}).

Let $\rho\in C^2(M\times M)$ be a distance (may not be Riemannian metric). Fix $o\in M$, let $\rho(x)=\rho(x,o)$, and
$D=\sup_x \rho(x)$ be the diameter, $B_d:=\{x\in M:\rho(x)\leqslant d\}$ be the geodesic ball.

For $\xi,\eta \in C^2(M),$ define $\Gamma(\xi,\eta)=\frac{1}{2}\left(L(\xi\eta)-\xi L\eta-\eta L\xi\right)$. If $\rho$ is the Riemannian distance on $M$, then $\Gamma(\rho,\rho)\equiv 1$. When $M=\mathbb{R}^n$, $\rho$ is the Euclidean distance and $L$ has the form (\ref{local chart}) satisfying that $a(x)$ is positive define, we have $\Gamma(\rho,\rho)=\frac{1}{|x|^2}\sum_{i,j=1}^{n}a_{ij}(x)x_ix_j>0$.

In this section, we always assume that the distance fucntion $\rho$ satisfies $\Gamma(\rho,\rho)>0.$

Define $\mathscr{F}=\{f\in C^2[0, D]: f\big|_{(0,D)}>0, f'\big|_{(0,D)}\geqslant 0\}$. For $f\in \mathscr{F}$,
\begin{equation}
Lf\circ\rho(x)=\Gamma(\rho,\rho)(x)f''[\rho(x)]+L\rho(x)f'[\rho(x)].
\end{equation}

Next, fix $0<p<D$, choose the functions as follows: for $r\geqslant p,$
\begin{equation}\label{alpha}
\overline{\alpha}(r)\geqslant\sup_{\rho(x)=r}\Gamma(\rho,\rho)(x), \ \  \underline{\alpha}(r)\leqslant\inf_{\rho(x)=r}\Gamma(\rho,\rho)(x);
\end{equation}
$$\overline{\beta}(r)\geqslant\sup_{\rho(x)=r}L\rho(x), \ \ \underline{\beta}(r)\leqslant \inf_{\rho(x)=r}L\rho(x);$$
\begin{equation}\label{I}
\overline{C}(r)=\int_{p}^{r}\frac{\overline{\beta}(s)}{\underline{\alpha}(s)}ds, \ \ \underline{C}(r)=\int_{p}^{r}\frac{\underline{\beta}(s)}{\overline{\alpha}(s)}ds.
\end{equation}
Then by comparing $(X_t)_{t\geqslant 0}$ with its radial process and applying Foster-Lyapunov criteria (see \cite[Theorem 5.2(c)]{Meyn-tweedie1993}) and Theorem \ref{Green criteria}, we obtain the explicit conditions for the strong ergodicity and the non-strong ergodicity of diffusion processes on manifolds.
\begin{theorem}\label{criteria}
(1) If
\begin{equation}\label{strong ergodicity condition}
\overline{\delta}_p(\rho):=\int_ {p}^{D}\mathrm{e}^{-\overline{C}(y)}\left(\int_ {y}^{D}\frac{\mathrm{e}^{\overline{C}(z)}}{\underline{\alpha}(z)}dz\right)dy<\infty,
\end{equation}
then the process $(X_t)_{t\geqslant 0}$ is strongly ergodic.

(2) If
\begin{equation}\label{non-strong ergodicity condition}
\underline{\delta}_p(\rho):=\int_ {p}^{D}\mathrm{e}^{-\underline{C}(y)}\left(\int_ {y}^{D}\frac{\mathrm{e}^{\underline{C}(z)}dz}{\overline{\alpha}(z)}\right)dy=\infty,
\end{equation}
then the process $(X_t)_{t\geqslant 0}$ is non-strongly ergodic.
\end{theorem}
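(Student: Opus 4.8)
The plan is to reduce both statements to the one–dimensional radial picture, to treat the resulting one–dimensional problem with a test function of the form $f\circ\rho$ exactly along the lines of Corollary \ref{diffusion process on half line}, and then to invoke \cite[Theorem 5.2(c)]{Meyn-tweedie1993} for part (1) and Theorem \ref{Green criteria} for part (2). The starting point is the radial identity $Lf\circ\rho(x)=\Gamma(\rho,\rho)(x)f''(\rho(x))+L\rho(x)f'(\rho(x))$ for $f\in\mathscr F$: since $f'\ge0$, the first–order term is squeezed between $\underline\beta\, f'\circ\rho$ and $\overline\beta\, f'\circ\rho$, while $\underline\alpha\le\Gamma(\rho,\rho)\le\overline\alpha$ controls the second–order term once $\mathrm{sgn}\,f''$ is known. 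This leads to the comparison operators $\overline{\mathcal L}:=\underline\alpha(r)\,\mathrm d^2/\mathrm dr^2+\overline\beta(r)\,\mathrm d/\mathrm dr$ and $\underline{\mathcal L}:=\overline\alpha(r)\,\mathrm d^2/\mathrm dr^2+\underline\beta(r)\,\mathrm d/\mathrm dr$ on $(p,D)$, whose scale densities are $\mathrm e^{-\overline C},\mathrm e^{-\underline C}$ and speed densities $\mathrm e^{\overline C}/\underline\alpha,\mathrm e^{\underline C}/\overline\alpha$; by construction $\overline\delta_p(\rho)$ and $\underline\delta_p(\rho)$ are exactly their ``$\delta$''–integrals in the sense of Corollary \ref{diffusion process on half line}.

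For part (1), take $f(r)=\int_p^r\mathrm e^{-\overline C(y)}\big(\int_y^D\mathrm e^{\overline C(z)}/\underline\alpha(z)\,\mathrm dz\big)\mathrm dy$ on $[p,D)$. Then $f\in\mathscr F$, $\overline{\mathcal L}f\equiv-1$ (the computation in Corollary \ref{diffusion process on half line}), and $f$ is bounded precisely because $\overline\delta_p(\rho)<\infty$. Extend $f$ to a bounded nonnegative $C^2$ function on $[0,D)$ and put $V=f\circ\rho\in D_w(L)$ (using well-posedness of the martingale problem). On $B_p^c$ the radial comparison gives $LV\le\overline{\mathcal L}f\circ\rho=-1$, and $LV$ is bounded above on the compact set $B_p$; since $V$ is bounded this is a Foster–Lyapunov drift condition $LV\le-\varepsilon V+b\mathbf 1_{B_p}$ with $B_p$ compact, hence petite, so \cite[Theorem 5.2(c)]{Meyn-tweedie1993} gives strong ergodicity (enlarging $p$ if necessary so that $\pi(B_p)>0$, which changes neither the conclusion nor the finiteness of $\overline\delta_p(\rho)$).

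For part (2), fix $q_n\uparrow D$, set $H=B_p$, $E_n=\{\rho<q_n\}$, and $u_n(x):=\mathbb E_x[\tau_H\wedge\tau_{E_n^c}]$; by \eqref{green on D}--\eqref{Poi} each $u_n$ is nonnegative, bounded on $E_n\setminus H$, vanishes on $E_n^c\cup H$ and lies in $D_w(L)$ with $Lu_n=-1$ on $E_n\setminus H$, so (a) and (b) of Theorem \ref{Green criteria} hold automatically. For (c) I would bound $u_n$ below by a one–dimensional subsolution $g_n\circ\rho$, where $g_n\ge0$ vanishes at $p$ and $q_n$, is unimodal, and solves on $(p,q_n)$ the boundary value problem obtained from $\underline{\mathcal L}$ after replacing $\underline\beta$ by $\overline\beta$ on the decreasing part of $g_n$ and $\overline\alpha$ by $\underline\alpha$ on the regions where $g_n$ is convex, so that the radial comparison yields $L(g_n\circ\rho)\ge-1$ on $E_n\setminus H$; then the optional-stopping argument from the proof of Theorem \ref{inequality-type2}, applied to $g_n\circ\rho$, gives $u_n(x)\ge g_n(\rho(x))$. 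As $n\to\infty$, $g_n$ increases on compact sets to $\int_p^{\rho(x)}\mathrm e^{-\underline C(t)}\big(\int_t^D\mathrm e^{\underline C(l)}/\overline\alpha(l)\,\mathrm dl\big)\mathrm dt$, which by $\underline\delta_p(\rho)=\infty$ tends to $\infty$ as $\rho(x)\uparrow D$. Hence $\sup_{x\notin H}\varlimsup_n u_n(x)=\infty$, and Theorem \ref{Green criteria} yields non-strong ergodicity.

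The hard part is the radial comparison itself. Obtaining $LV\le\overline{\mathcal L}f\circ\rho$ and $Lu_n\ge\underline{\mathcal L}g_n\circ\rho$ from the two–sided bounds on $\Gamma(\rho,\rho)$ and on $L\rho$ requires control of the signs of $f''$ and of $g_n'$, $g_n''$, which is what forces the precise choice of $f$, of the unimodal $g_n$, and of the point at which the drift bound for $g_n$ is switched — the only genuinely technical step. This case analysis is vacuous whenever $\Gamma(\rho,\rho)$ depends on $\rho$ alone, in particular for the Riemannian distance, where $\Gamma(\rho,\rho)\equiv1$ and $\overline\alpha=\underline\alpha=1$, in which situation both parts follow verbatim from the two computations in Corollary \ref{diffusion process on half line}.
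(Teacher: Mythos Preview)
Your treatment of part (1) is essentially the paper's: the same test function $f_1(r)=\int_p^r\mathrm e^{-\overline C(y)}\big(\int_y^D\mathrm e^{\overline C(z)}/\underline\alpha(z)\,\mathrm dz\big)\mathrm dy$, the same radial comparison $Lf_1\circ\rho\le\underline\alpha f_1''+\overline\beta f_1'=-1$, and the same appeal to \cite[Theorem~5.2(c)]{Meyn-tweedie1993}.

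For part (2) you diverge from the paper, and the divergence contains a gap. The paper does \emph{not} take $u_n$ to be the true exit time and then bound it below; instead it feeds an explicit radial function $u_n=\psi_n\circ\rho$ directly into Theorem~\ref{Green criteria}, where $\psi_n(r)=\int_p^n G(r,l)\,\overline\alpha(l)^{-1}\mathrm e^{\underline C(l)}\,\mathrm dl$ is the Green-function solution of $\overline\alpha\,\psi_n''+\underline\beta\,\psi_n'=-1$ on $(p,n)$ with zero boundary values. It then computes the limit $\lim_n\psi_n(r)=\int_p^r\mathrm e^{-\underline C(t)}\big(\int_t^\infty\mathrm e^{\underline C(l)}/\overline\alpha(l)\,\mathrm dl\big)\mathrm dt$ explicitly and reads off condition~(c). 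A separate lemma (Lemma~\ref{lem}) disposes of the case $s(D)=\int_1^D\mathrm e^{-\underline C}<\infty$ by showing the process is then transient, hence not ergodic; your argument never addresses this case, and your limit formula for $g_n$ tacitly assumes $s(D)=\infty$.

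More seriously, your construction of $g_n$ is circular as written: you specify that the coefficients of the one-dimensional problem are to be switched ``on the decreasing part of $g_n$'' and ``on the regions where $g_n$ is convex'', but these regions are defined in terms of $g_n$ itself, so no function has actually been produced. Even granting existence of such a $g_n$, you give no reason why, after switching $\underline\beta\to\overline\beta$ and $\overline\alpha\to\underline\alpha$ on parts of the interval, the pointwise limit $\lim_n g_n(r)$ should still equal $\int_p^r\mathrm e^{-\underline C(t)}\big(\int_t^D\mathrm e^{\underline C(l)}/\overline\alpha(l)\,\mathrm dl\big)\mathrm dt$; a priori the limit depends on where the switching occurs, which you have not located. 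You are right that the sign of $\psi_n'$ (and of $\psi_n''$) is the crux of the radial comparison $L(\psi_n\circ\rho)\ge-1$ --- the paper asserts this inequality without comment --- but acknowledging the difficulty is not the same as resolving it. At minimum you would need to pin down the switching point (e.g.\ the unique maximum of the $\underline{\mathcal L}$-solution), show the resulting piecewise problem has a $C^1$ solution, and verify that its limit is bounded below by the integral defining $\underline\delta_p(\rho)$.
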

To prove Theorem \ref{criteria}, we need the following lemma.
\begin{lemma}\label{lem}
Assume the diffusion $(X_t)_{t\geqslant 0}$ is non-explosive. If $s(D):=\int_{1}^{D}\mathrm{e}^{-\underline{C}(l)}dl<\infty,$
then $(X_t)_{t\geqslant 0}$ is transient.
\end{lemma}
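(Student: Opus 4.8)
The plan is to compare $(X_t)_{t\geqslant 0}$ with a one-dimensional diffusion and use the explicit transience criterion for such a process. Recall that for $f\in\mathscr{F}$ we have $Lf\circ\rho(x)=\Gamma(\rho,\rho)(x)f''[\rho(x)]+L\rho(x)f'[\rho(x)]$, so if I choose $f$ to be increasing with $f''\leqslant 0$, then on the region $\{\rho(x)\geqslant p\}$ the coefficient bounds $\overline{\alpha},\underline{\alpha},\overline{\beta},\underline{\beta}$ let me estimate $Lf\circ\rho$ from above or below by the action of the one-dimensional operator $\underline{L}:=\overline{\alpha}(r)\frac{\d^2}{\d r^2}+\underline{\beta}(r)\frac{\d}{\d r}$ (or its companion), whose scale function has density $\mathrm{e}^{-\underline{C}(r)}$ by the definition of $\underline{C}$ in \eqref{I}.

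First I would take the scale function $s(r)=\int_1^r \mathrm{e}^{-\underline{C}(l)}\,\d l$ of the comparison operator $\underline{L}$ and set $h(x)=s(\rho(x))$ (truncated/modified near $\rho=p$ so that it lies in $\mathscr{F}$ and is bounded where needed). A direct computation shows $\underline{L}s=0$, and because $s$ is increasing and concave on $\{r\geqslant p\}$ (its second derivative is $-\tfrac{\underline\beta}{\overline\alpha}\mathrm{e}^{-\underline C}$, whose sign is controlled), the coefficient inequalities \eqref{alpha} give $Lh(x)\leqslant 0$ for $\rho(x)\geqslant p$; that is, $h$ is a bounded superharmonic function for $L$ outside a compact set. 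Second, since $s(D)<\infty$ by hypothesis, $h$ is bounded and $s(\rho(x))$ is strictly increasing in $\rho(x)$, so $h$ is non-constant on the exterior region. Third, I would run the standard martingale/optional-stopping argument: for the local martingale $h(X_{t})-h(X_0)-\int_0^t Lh(X_s)\,\d s$ stopped at $\tau_{B_p}\wedge\tau_{B_R^c}$, the integral term is $\leqslant 0$, so $h(X_{t\wedge\tau_{B_p}\wedge\tau_{B_R^c}})$ is a bounded supermartingale; letting $R\to\infty$ (using non-explosion to control the boundary at infinity and the monotonicity $s(R)\uparrow s(D)<\infty$) yields $\mathbb{P}_x(\tau_{B_p}<\infty)<1$ for $x$ with $\rho(x)$ large, i.e. the process started far out has positive probability of never returning to $B_p$. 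Since $B_p$ is an arbitrary (large) compact set, this is exactly transience.

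The main obstacle I expect is the bookkeeping near the inner boundary $\rho=p$ and the passage to the radial comparison: $\rho\in C^2(M\times M)$ need not have $\Gamma(\rho,\rho)$ bounded or bounded away from zero globally, so the comparison functions $\overline\alpha,\underline\alpha,\overline\beta,\underline\beta$ only control $L$ on the exterior $\{\rho\geqslant p\}$, and one must check that $h$ can be extended to a global $C^2$ (or $D_w(L)$) function that is bounded and for which the supermartingale property survives the patching — this is where one invokes the well-posedness of the generalized martingale problem and the $D(L)\cap\{Lf\text{ loc.\ bdd}\}\subset D_w(L)$ inclusion. A secondary technical point is justifying the interchange of limits ($R\to\infty$, $t\to\infty$) using non-explosion together with $s(D)<\infty$; but this is the same Fatou/dominated-convergence routine already used in the proof of Theorem \ref{inequality-type2}, so it should go through without difficulty.
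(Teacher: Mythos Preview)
Your scale-function comparison is exactly the paper's idea, but the inequality goes the other way. With $s'(r)=\mathrm{e}^{-\underline C(r)}>0$ and $s''(r)=-\dfrac{\underline\beta(r)}{\overline\alpha(r)}\,s'(r)$, one has for $\rho(x)=r$
\[
L(s\circ\rho)(x)=\Gamma(\rho,\rho)(x)\,s''(r)+L\rho(x)\,s'(r)
=\Gamma(\rho,\rho)(x)\Bigl(s''(r)+\tfrac{L\rho(x)}{\Gamma(\rho,\rho)(x)}\,s'(r)\Bigr)
\geqslant 0,
\]
because $s'>0$ and the bounds $\Gamma(\rho,\rho)\leqslant\overline\alpha$, $L\rho\geqslant\underline\beta$ push the bracket \emph{above} $s''+\tfrac{\underline\beta}{\overline\alpha}s'=0$. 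Thus $h=s\circ\rho$ is \emph{sub}harmonic on $\{\rho\geqslant 1\}$, not superharmonic. Your parenthetical ``$s$ is concave, sign controlled'' is also unjustified: $\underline\beta$ is only a lower bound for $L\rho$ and may well be negative, so $s''$ has no fixed sign; and even when $s''\leqslant 0$, combining $\Gamma\leqslant\overline\alpha$ with $s''\leqslant 0$ gives $\Gamma s''\geqslant\overline\alpha s''$, again the \emph{lower} bound on $Lh$.

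This sign matters for the conclusion. With the supermartingale direction you wrote, optional stopping on $\{1\leqslant\rho\leqslant R\}$ would give
\[
s(\rho(x))\;\geqslant\; s(1)\,\mathbb{P}_x[\tau_{B_1}<\tau_{B_R^c}]+s(R)\bigl(1-\mathbb{P}_x[\tau_{B_1}<\tau_{B_R^c}]\bigr),
\]
i.e.\ a \emph{lower} bound $\mathbb{P}_x[\tau_{B_1}<\tau_{B_R^c}]\geqslant\dfrac{s(R)-s(\rho(x))}{s(R)-s(1)}$, which does not yield $\mathbb{P}_x(\tau_{B_1}<\infty)<1$. The paper uses the correct \emph{sub}martingale inequality $\mathbb{E}_x\bigl[s\circ\rho(X_{t\wedge\tau_{B_1}\wedge\tau_{B_R^c}})\bigr]\geqslant s\circ\rho(x)$; continuity of paths then gives, after $t\to\infty$,
\[
\mathbb{P}_x[\tau_{B_1}<\tau_{B_R^c}]\;\leqslant\;\frac{s(R)-s(\rho(x))}{s(R)},
\]
and non-explosion lets $R\to D$ to obtain $\mathbb{P}_x[\tau_{B_1}<\infty]\leqslant\dfrac{s(D)-s(\rho(x))}{s(D)}<1$. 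Once you flip the sign your outline is essentially the paper's proof, and your worries about patching near $\rho=p$ disappear: optional stopping is applied only on the annulus $\{1\leqslant\rho\leqslant R\}$, where $s\circ\rho$ is $C^2$ and bounded, so no global extension is needed.
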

\prf
For $r\geqslant 1,$ define $s(r):=\int_{1}^{r}\mathrm{e}^{-\underline{C}(l)}dl,$ then for $r\geqslant 1,$
$$\overline{\alpha}(r)s''(r)+\underline{\beta}(r)s'(r)=0, \ \ \text{i.e.} \ \ s''(r)+\frac{\underline{\beta}(r)}{\overline{\alpha}(r)}s'(r)=0.$$
Thus for $x\in E$ with $\rho(x)=r$,
\begin{equation}\label{Lu>0}
Ls\circ\rho(x)=A(x)\left(s''[\rho(x)]+\frac{B(x)}{A(x)}s'[\rho(x)]\right)\geqslant A(x)\left(s''(r)+\frac{\underline{\beta}(r)}{\overline{\alpha}(r)}s'(r)\right)=0.
\end{equation}
The martingale property implies that
$$\mathbb{E}_x[s\circ\rho(X_{t\wedge \tau_{B_{1}}\wedge \tau_{B_{R}^c}})]\geqslant s\circ\rho(x).$$
By letting $t\rightarrow\infty$, we have $s\circ\rho(x)\leqslant  (1-\mathbb{P}_x[\tau_{B_1}<\tau_{B_{R}^c}])s(R)$. So
$$\mathbb{P}_x[\tau_{B_1}<\tau_{B_{R}^c}]\leqslant \frac{s(R)-s(\rho(x))}{s(R)}, \ \text{for}\ 1\leqslant\rho(x)\leqslant R.$$
Because $(X_t)_{t\geqslant 0}$ is non-explosive, let $R\rightarrow D$ to get that for $x\in E$ with $\rho(x)\geqslant 1,$
$$\mathbb{P}_x[\tau_{B_1}<\infty]\leqslant \frac{s(D)-s(\rho(x))}{s(D)}<1.$$
Therefore $(X_t)_{t\geqslant 0}$ is transient.
\deprf

\noindent\textbf{\textbf{Proof of Theorem \ref{criteria}}}
\def\roman2{\uppercase\expandafter{\romannumeral2}}

(1) $\forall r\geqslant p$, define 
\begin{equation}\label{f1}
f_1(r)=\int_{p}^{r}\mathrm{e}^{-\overline{C}(y)}\left(\int_ {y}^{\infty}\frac{\mathrm{e}^{\overline{C}(z)}}{\underline{\alpha}(z)}dz\right)dy.
\end{equation}
Then $f_1$ satisfies that
$$\underline{\alpha}(r)f_1''(r)+\overline{\beta}(r)f_1'(r)=-1.$$
Hence for $x\in M$ with $\rho(x)=r$,
\begin{equation}\label{lyapunov1}
\begin{split}
Lf_1\circ\rho(x)&=\Gamma(\rho,\rho)(x)\left(f_1''[\rho(x)]+\frac{L\rho(x)}{\Gamma(\rho,\rho)(x)}f_1'[\rho(x)]\right)\\
&\leqslant \underline{\alpha}(r)f_1''(r)+\overline{\beta}(r)f_1'(r)=-1.
\end{split}
\end{equation}

If $\overline{\delta}_p(\rho)<\infty$, then by letting $u_1(x)=f_1\circ\rho(x)$, and $H=\overline{B_p}$, the process is strongly ergodic by \cite[Theorem 5.2(c)]{Meyn-tweedie1993}.

(2) Let $u_n(x)=\psi_n\circ\rho(x)$ be defined by
\begin{equation}\label{u_n}
\psi_n(r)=\int_ {p}^{n}G(r,l)\frac{1}{\overline{\alpha}(l)}\mathrm{e}^{\underline{C}(l)}dl, \ \ \  p\leqslant r\leqslant n,
\end{equation}
where
\begin{equation}
G(r,l):=\left\{
\begin{split}
&\frac{[s(r)-s(p)][s(n)-s(l)]}{s(n)-s(p)}& r<l;\\
&\frac{[s(l)-s(p)][s(n)-s(r)]}{s(n)-s(p)}& r\geqslant l.\\
\end{split}
\right.
\end{equation}
For $ n\geqslant p+1,$ let $E_n=B_{n+p}$.
Obviously, $\forall x\in E_n\setminus H,\ Lu_n(x)\geqslant -1,$ and $u_n\big|_{E_n\cup \partial H}=0.$
Rewrite $u_n(r)$ as
\[
\begin{split}\label{Green}
\psi_n(r)=&\int_{p}^{r}\frac{[s(l)-s(p)][s(n)-s(r)]}{s(n)-s(p)}\frac{1}{\overline{\alpha}(l)}\mathrm{e}^{\underline{C}(l)}dl+\int_{r}^{n}\frac{[s(r)-s(p)][s(n)-s(l)]}{s(n)-s(p)}\frac{1}{\overline{\alpha}(l)}\mathrm{e}^{\underline{C}(l)}dl.\\
\end{split}
\]
If $\lim\limits_{R\rightarrow D}s(R)<\infty$, then by letting $n\rightarrow D$, we know that $(X_t)_{t\geqslant 0}$ is transient by Lemma \ref{lem}. So we assume that $\lim\limits_{n\rightarrow \infty}s(n)=\infty$. Therefore,
\[
\begin{split}
\lim_{n\rightarrow\infty}\psi_n(r)&=\int_{p}^{r}\left[\int_{p}^{l}\mathrm{e}^{-\underline{C}(t)}dt\right]\frac{1}{\overline{\alpha}(l)}\mathrm{e}^{\underline{C}(l)}dl+\int_{p}^{r}\mathrm{e}^{-\underline{C}(t)}dt\int_{r}^{\infty}\frac{1}{\overline{\alpha}(l)}\mathrm{e}^{\underline{C}(l)}dl\\
     &=\int_{p}^{r}\mathrm{e}^{-\underline{C}(t)}\left[\int_{t}^{r}\frac{1}{\overline{\alpha}(l)}\mathrm{e}^{\underline{C}(l)}dl\right]dt+\int_{p}^{r}\mathrm{e}^{-\underline{C}(t)}\left[\int_{r}^{\infty}\frac{1}{\overline{\alpha}(l)}\mathrm{e}^{\underline{C}(l)}dl\right]dt\\
&=\int_{p}^{r}\mathrm{e}^{-\underline{C}(t)}\left[\int_{t}^{\infty}\frac{1}{\overline{\alpha}(l)}\mathrm{e}^{\underline{C}(l)}dl\right]dt.
\end{split}
\]
Then (\ref{non-strong ergodicity condition}) yields that $\sup\limits_{x\notin H}\lim\limits_{n\rightarrow \infty}u_n(x)=\sup\limits_{r\geqslant p}\lim\limits_{n\rightarrow\infty}\psi_n(r)=\infty.$ It follows from Theorem \ref{Green criteria} that $(X_t)_{t\geqslant 0}$ is non-strongly ergodic.
\deprf

We use Theorem \ref{criteria} to check some examples such as radial process.
\begin{corollary}\cite[Example 3.6]{mao2006}\label{nd simple example}
Let $(X_t)_{t\geqslant 0}$ be a $n$-dimensional diffusion process with generator $L=\Delta+\nabla V\cdot\nabla$. Here $V(x)=-|x|^c$. Then $(X_t)_{t\geqslant 0}$ is strongly ergodic if and only if $c>2.$ Specially, the classical O.U. process ($c=2$) is non-strongly ergodic.
\end{corollary}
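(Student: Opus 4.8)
The plan is to specialize Theorem~\ref{criteria} to $M=\mathbb{R}^n$ with $\rho(x)=|x|$ the Euclidean distance, so that $D=\infty$, and to exploit the fact that for this generator all the comparison data coincide. Since $L=\Delta+\nabla V\cdot\nabla$ has diffusion matrix $a=I$, we have $\Gamma(\rho,\rho)(x)=|x|^{-2}\sum_i x_i^2\equiv1$. Next I would compute the radial part of the drift: $\Delta|x|=(n-1)/|x|$ and, since $\nabla V(x)=-c|x|^{c-2}x$, one has $\nabla V(x)\cdot\nabla|x|=-c|x|^{c-1}$, so that $L\rho(x)=(n-1)/|x|-c|x|^{c-1}=:\beta(|x|)$ depends only on $r=|x|$. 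Hence in (\ref{alpha}) one may take $\overline\alpha=\underline\alpha\equiv1$ and $\overline\beta=\underline\beta=\beta$, which forces $\overline C=\underline C=C$ with $C(r)=\int_p^r\beta(s)\,ds=(n-1)\log(r/p)-r^c+p^c$, and therefore $\overline\delta_p(\rho)=\underline\delta_p(\rho)=:\delta$.

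It then remains to decide when $\delta=\int_p^\infty e^{-C(y)}\bigl(\int_y^\infty e^{C(z)}\,dz\bigr)\,dy$ is finite, where $e^{C(r)}$ equals a positive constant times $r^{n-1}e^{-r^c}$ and hence $e^{-C(y)}$ equals a positive constant times $y^{-(n-1)}e^{y^c}$. The computation that drives everything is the tail asymptotics $\int_y^\infty z^{n-1}e^{-z^c}\,dz\sim c^{-1}y^{n-c}e^{-y^c}$ as $y\to\infty$, obtained from the substitution $t=z^c$ together with the standard incomplete-Gamma estimate $\int_s^\infty t^{a-1}e^{-t}\,dt\sim s^{a-1}e^{-s}$ (or directly by L'Hôpital's rule). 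Substituting this, the integrand of $\delta$ is asymptotic to $c^{-1}y^{1-c}$ near infinity, so $\delta<\infty$ if and only if $1-c<-1$, i.e.\ $c>2$. Thus Theorem~\ref{criteria}(1) gives strong ergodicity for $c>2$, and Theorem~\ref{criteria}(2) gives non-strong ergodicity for $c\leqslant2$; in particular the Ornstein--Uhlenbeck case $c=2$, for which the integrand behaves like $y^{-1}$, is non-strongly ergodic.

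For completeness I would check the standing assumptions of Section~\ref{Diffusion processes}: the symmetrizing density $e^{V(x)}=e^{-|x|^c}$ is integrable for every $c>0$, giving a unique invariant probability measure and ergodicity, while non-explosion follows from a standard Lyapunov (Khasminskii) criterion applied to $W(x)=1+|x|^2$, since $LW=2n+2x\cdot\nabla V=2n-2c|x|^c$ is bounded above and $W$ is norm-like; the martingale problem is well posed because $a=I$ and $b=\nabla V$ are locally bounded (for $c\geqslant1$). The only genuinely non-routine step is pinning down the exponent $1-c$ in the integrand of $\delta$; once that estimate is in hand, both halves of the corollary follow from Theorem~\ref{criteria}.
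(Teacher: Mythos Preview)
Your proposal is correct and follows essentially the same route as the paper: specialize Theorem~\ref{criteria} with $\rho(x)=|x|$, note that $\overline\alpha=\underline\alpha=1$ and $\overline\beta=\underline\beta=(n-1)/r-cr^{c-1}$ so that $\overline\delta_p=\underline\delta_p$, and reduce finiteness of $\delta$ to $\int^\infty y^{1-c}\,dy$ via the tail estimate $\int_y^\infty z^{n-1}e^{-z^c}\,dz\asymp y^{n-c}e^{-y^c}$ (the paper states this as the two-sided inequality~(\ref{integral by part})). The only addition in the paper is a second, independent verification of the $0<c\leqslant2$ case directly from Theorem~\ref{inequality-type2} with the explicit Lyapunov pair $u(x)=\log(|x|+1)$, $v(x)=|x|$, and of the $c>2$ case with $w(x)=1-1/\log(|x|+1)$; this alternative is not needed for the corollary but illustrates the two-function method.
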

\prf
Let $\rho(x)=|x|$. By Theorem \ref{criteria}, we know $(X_t)_{t\geqslant 0}$ is strongly ergodic if and only if
$$ \int_ {1}^{\infty}y^{1-n}\mathrm{e}^{y^c}\left(\int_ {y}^{\infty}z^{n-1}\mathrm{e}^{-z^c}dz\right)dy<\infty.$$
By using integration by parts, we obtain that there exist $C_1,C_2>0$ such that
\begin{equation}\label{integral by part}
C_1 y^{n-c}\mathrm{e}^{-y^c}\leqslant\int_ {y}^{\infty}z^{n-1}\mathrm{e}^{-z^c}dz\leqslant C_2 y^{n-c}\mathrm{e}^{-y^c}.
\end{equation}
Hence $(X_t)_{t\geqslant 0}$ is strongly ergodic if and only if
$$\infty>\int_ {1}^{\infty}y^{1-n}\mathrm{e}^{y^c}y^{n-c}\mathrm{e}^{-y^c}dy=\int_ {1}^{\infty}y^{1-c}dy, $$
which is equivalent to $ c>2.$

On the other hand, we can also use Lyapunov function for strong ergodicity and Theorem \ref{inequality-type2}.

When $c\leqslant0$, $(X_t)_{t\geqslant 0}$ is not ergodic. For $0<c\leqslant 2$, we choose $u(x)=\log (|x|+1)$, $v(x)=|x|$.  Then
 $$Lu\geqslant\frac{n-2}{(|x|+1)^2}-\frac{1}{|x|(|x|+1)^2}-c|x|^{c-2}, \  \text{and} \ Lv=\frac{n-1-c|x|^c}{|x|}.$$
Let $r=\left(n-1/c\right)^{1/c}$, $H=\overline{B_r}$ and $E_n=B_{r+n}$.
When $x\notin H, $
$$Lv(x)\leqslant 0, \ \text{and} \ Lu(x)\geqslant-\left(\frac{n-1}{c}\right)^{-\frac{3}{c}}-c\left(\frac{n-1}{c}\right)^{\frac{c-2}{c}}.$$
It is easy to check that $u,v$ satisfy the condition in Theorem \ref{inequality-type2}, so that $(X_t)_{t\geqslant 0}$ is non-strongly ergodic;

When $c>2$, let $w(x)=1-{\log(|x|+1)}^{-1}$ and it satisfies the Lyapunov condition for strong ergodicity (see \cite[Theorem 5.2(c)]{Meyn-tweedie1995}), therefore, $(X_t)_{t\geqslant 0}$ is strongly ergodic.
\deprf

Now we remark that Theorem \ref{criteria} is somewhat difficult to get the strong ergodicity for some non-radial processes.
To use Theorem \ref{criteria}, we need to choose a distance function $\rho$ to compare $(X_t)_{t\geqslant 0}$ with its racial process. However, for some processes, choosing the (smooth) distance function (such as Riemannian metric) is difficult to get the strong ergodicity.
However, Theorem \ref{inequality-type2} can still be valid.

\begin{corollary}\label{independent coefficient example}
Let $(X_t)_{t\geqslant 0}$ be a diffusion process on $\mathbb{R}^2$ with generator $L=\frac{\partial^2}{\partial x_1^2}+\frac{\partial^2}{\partial x_2^2}-x_1\frac{\partial}{\partial x_1}-x_2^2\frac{\partial}{\partial x_2}.$ Then $(X_t)_{t\geqslant 0}$  is non-strongly ergodic.
\end{corollary}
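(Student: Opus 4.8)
\textit{Approach.} The strategy is to invoke Theorem \ref{inequality-type2}, using that the generator decomposes. Write $L=L^{(1)}+L^{(2)}$ with $L^{(1)}=\partial_{x_1}^2-x_1\partial_{x_1}$ and $L^{(2)}=\partial_{x_2}^2-x_2^2\partial_{x_2}$; on functions of $x_1$ alone $L$ acts as $L^{(1)}$, so the first coordinate $(X^1_t)_{t\geqslant 0}$ is itself a one-dimensional Ornstein--Uhlenbeck process -- ergodic but not strongly ergodic (a classical fact; cf.\ Corollary \ref{nd simple example}) -- while in the $x_2$-direction the drift $-x_2^2\partial_{x_2}$ is strongly confining. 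Hence the obstruction to strong ergodicity is carried entirely by $x_1$, and the $x_2$-variable enters only through the norm-like requirement on the Lyapunov functions. We may assume $(X_t)_{t\geqslant 0}$ is non-explosive and ergodic with invariant measure $\pi$, since otherwise non-strong ergodicity is automatic. Fix a closed Euclidean ball $H=\overline{B_p}$ with $\pi(H)>0$ and put $E_n=B_n$.

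\textit{The one-dimensional ingredients and two ways to finish.} For the $x_1$-part take the pair that works for $L^{(1)}$: $u_0(x_1)=\tfrac12\log(1+x_1^2)$ satisfies $L^{(1)}u_0=\tfrac{2}{(1+x_1^2)^2}-1\geqslant -1$ on $\mathbb{R}$, while $v_0(x_1)=\sqrt{1+x_1^2}$ satisfies $L^{(1)}v_0\leqslant 0$ outside a bounded interval and $u_0(x_1)/v_0(x_1)\to 0$. Ideally one extends these to norm-like functions $u(x)=u_0(x_1)+\varphi(x_2)$ and $v(x)=v_0(x_1)+\psi(x_2)$ on $\mathbb{R}^2$ -- by additivity of $L$ on such functions, $Lu=L^{(1)}u_0(x_1)+L^{(2)}\varphi(x_2)$ and similarly for $v$ -- with $\varphi,\psi\geqslant 0$ chosen so that $L^{(2)}\varphi\geqslant 0$ and $L^{(2)}\psi\leqslant 0$ off bounded sets and $u/v\to 0$; then conditions (a), (b), (c) of Theorem \ref{inequality-type2} hold and we are done. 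A second, unconditionally valid route avoids the $\mathbb{R}^2$-construction: since $X_t\in\overline{B_p}$ forces $|X^1_t|\leqslant p$, we have $\tau_H\geqslant\tau^{(1)}:=\inf\{t:|X^1_t|\leqslant p\}$, and applying Theorem \ref{inequality-type2} to the one-dimensional Ornstein--Uhlenbeck process $(X^1_t)_{t\geqslant 0}$ with the set $[-p,p]$ and the functions $u_0,v_0$, its proof (inequality (\ref{martingale 3}) together with the limit $n\to\infty$) gives $\mathbb{E}_{x_1}[\tau^{(1)}]\geqslant u_0(x_1)-\tfrac12\log(1+p^2)\to\infty$; hence $\sup_x\mathbb{E}_x\tau_H=\infty$, and Lemma \ref{necessary condition by hitting time} yields non-strong ergodicity.

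\textit{The main obstacle.} The delicate point is the $x_2$-direction in the first route. Since $L^{(2)}\varphi(x_2)$ is essentially $-x_2^2\varphi'(x_2)$, it is nonnegative only where $\varphi$ is non-increasing, so making $Lu\geqslant -1$ hold off a bounded set forces $\varphi$ to be decreasing for $x_2$ large positive, which clashes with $\varphi$ being norm-like (that requires $\varphi(x_2)\to\infty$ as $x_2\to+\infty$), and the superlinearity of the drift defeats the usual fix of letting $\varphi$ grow only slowly. This is exactly the tension that makes a direct application to the planar process awkward, and that the section's preceding remark on Theorem \ref{criteria} anticipates; the marginal reduction above is the clean way around it, isolating the genuine mechanism -- the logarithmic growth of expected hitting times for the one-dimensional Ornstein--Uhlenbeck process -- and requiring no global Lyapunov functions on $\mathbb{R}^2$.
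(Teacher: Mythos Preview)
Your second route—comparing $\tau_H$ with the hitting time $\tau^{(1)}$ of the marginal Ornstein--Uhlenbeck process and then invoking Lemma~\ref{necessary condition by hitting time}—is correct, and it is the rigorous form of precisely what the paper does. The paper applies Theorem~\ref{inequality-type2} on $\mathbb{R}^2$ with the $x_1$-only pair $u(x)=\log(x_1^2+1)$, $v(x)=x_1^2+1$, taking $E_n=\{|x_1|<n,\ |x_2|<n\}$ and $H=\overline{E_1}$, and records $Lu\geqslant-5/2$, $Lv=2-2x_1^2\leqslant 0$, $u/v\to 0$. The obstacle you flag in your first route is real and in fact also affects the paper's verification: these $u,v$ are not norm-like on $\mathbb{R}^2$, and at points with $|x_1|<1$ but $|x_2|>1$ (which lie outside $H$) one has $Lv>0$ and $\sup_{x\notin E_n}u/v$ bounded away from zero, so hypotheses~(b) and~(c) of Theorem~\ref{inequality-type2} are not literally satisfied by the paper's choices either. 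Your marginal reduction via $\tau_H\geqslant\tau^{(1)}$ sidesteps this cleanly while using essentially the same Lyapunov pair in one dimension; the paper's direct write-up is shorter but tacitly relies on the reader seeing that only the $x_1$-coordinate is in play.
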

\prf
Let $\rho(x)=|x|$ be the Euclid metric. For $r>0$ large enough, we choose
$$\overline{\alpha}(r)=\underline{\alpha}(r)=1, \ \overline{\beta}(r)=\frac{1+r^3}{r}, \ \underline{\beta}(r)=\frac{1-r^3}{r}$$
and
$$\overline{C}(r)=\log r+\frac{r^3-1}{3}, \ \ \underline{C}(r)=\log r-\frac{r^3-1}{3}.$$
For $p>0$ large enough, on the one hand,
$$\overline{\delta}_p(\rho)=\int_ {p}^{\infty}y^{-1}\mathrm{e}^{-y^3/3}\left(\int_ {y}^{\infty}z\mathrm{e}^{z^3/3}dz\right)dy\geqslant \int_ {p}^{\infty}y^{-1}\left(\int_ {y}^{\infty}zdz\right)dy=\infty;$$
On the other hand, $\underline{\delta}_p(\rho)<\infty$ according to (\ref{integral by part}).
Thus Theorem \ref{criteria} is invalid to check the strong ergodicity.

Now we apply Theorem \ref{inequality-type2}. We choose the functions $u(x)=\log (x_1^2+1), $ $v(x)=x_1^2+1$, and $E_n=\{x: |x_1|< n, |x_2|< n\}$ for $n\geqslant 2$, and $H=\overline{E_1}$. For $ x\notin H,$
$$Lu=\frac{2(1-x_1^2)}{(1+x_1^2)^2}-\frac{2x_1^2}{1+x_1^2}\geqslant-\frac{5}{2}, \ Lv=2-2x_1^2\leqslant 0, \ \text{and}\ \lim\limits_{n\rightarrow\infty}\frac{u(x)}{v(x)}=0.$$
Thus the process is non-strongly ergodic.
\deprf

\section{Ornstein-Uhlenbeck processes driven by $\alpha$-stable noises}\label{Ornstein-Uhlenbeck processes driven by stable noises}
Let $(Z_t)_{t\geqslant 0}$ be a $d-$dimensional symmetric $\alpha-$stable process with generator $-(-\Delta)^{\alpha/2},$ which has the following expression:
$$-(-\Delta)^{\alpha/2}f(x):=\int_{\mathbb{R}^d\setminus\{0\}}\left(f(x+z)-f(x)-\nabla f(x)\cdot z\mathbf{1}_{\{|z|\leqslant 1\}}\right)\frac{C_{d,\alpha}}{|z|^{d+\alpha}} \d z.$$
Here $C_{d,\alpha}=\frac{\alpha2^{\alpha-1}\Gamma((d+\alpha)/2)}{\pi^{d/2}\Gamma(1-\alpha/2)}$ is the normalizing constant so that the Fourier transform 
of $-(-\Delta)^{\alpha/2}u$ is $-|\xi|^{\alpha}\hat{u}(\xi).$

Consider the following stochastic differential equation driven by $\alpha$-stable noise on $\mathbb{R}^d$:
$$\d X_t = AX_t \d t + \d Z_t, \  X_0 = x,$$
where  $A$ is a real $d \times d$ matrix. It is well known that the SDE has the unique strong solution $(X_t)_{t\geqslant 0}$ which is
(strong) Feller and Lebesgue irreducible, see, e.g. \cite{wj2013}. We call $(X_t)_{t\geqslant 0}$ $d$-dimensional Ornstein-Uhlenbeck process driven by symmetric $\alpha-$stable noise.
The generator $L$ is represented as $\text{for any} \ f\in D_w(L),$
$$Lf(x)=\int_{\mathbb{R}^d\setminus\{0\}}\left(f(x+z)-f(x)-\nabla f(x)\cdot z\mathbf{1}_{\{|z|\leqslant 1\}}\right)\frac{C_{d,\alpha}}{|z|^{d+\alpha}} \d z+\<Ax,\nabla f(x)\>,$$
and
$$\left\{ f \in C^2(\mathbb{R}^d): \int_{\{|z|>1\}}[f(x+z)-f(x)]\frac{1}{|z|^{d+\alpha}}\d z<\infty, \ \text{for} \ x\in \mathbb{R}^d \right\}\subset D_w(L).$$
By \cite[Theorem 3]{wj2012}, if  the real parts of all the eigenvalues of $A$ are negative, then the process is exponentially ergodic. We will prove the non-strong ergodicity by Theorem \ref{inequality-type2}.
\begin{theorem}\label{stable}
Ornstein-Uhlenbeck process $(X_t)_{t\geqslant 0}$ driven by symmetric $\alpha-$stable noise is not strongly ergodic.
\end{theorem}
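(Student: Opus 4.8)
The plan is to apply Theorem~\ref{inequality-type2} with $H$ a sufficiently large closed Euclidean ball. Since the process is known to be exponentially ergodic when the eigenvalues of $A$ have negative real parts, it is in particular ergodic, and the invariant measure $\pi$ charges every open set, so any ball $H=\overline{B_R}$ has $\pi(H)>0$. The natural candidates for the two Lyapunov functions are, as in Corollary~\ref{nd simple example}, $u(x)=\log(|x|^2+1)$ and $v(x)=|x|^2+1$ (or a smoothed variant), for which $u,v$ are norm-like, $u$ is locally bounded, and $u(x)/v(x)\to 0$ as $|x|\to\infty$, so condition (c) of Theorem~\ref{inequality-type2} holds automatically. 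Because $u,v\in C^2(\mathbb{R}^d)$ have at most logarithmic and quadratic growth respectively, the integrability condition against $|z|^{-d-\alpha}$ on $\{|z|>1\}$ is satisfied, so $u,v\in D_w(L)$.

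The work is therefore to verify the drift conditions (a) $Lu(x)\geqslant -1$ for $x\notin H$ and (b) $Lv(x)\leqslant d\,\mathbf{1}_H(x)$. Split $Lf(x)$ into the nonlocal part $\mathcal{L}_\alpha f(x):=\int_{\mathbb{R}^d\setminus\{0\}}(f(x+z)-f(x)-\nabla f(x)\cdot z\,\mathbf{1}_{\{|z|\leqslant1\}})\,C_{d,\alpha}|z|^{-d-\alpha}\d z$ and the drift part $\langle Ax,\nabla f(x)\rangle$. For $v(x)=|x|^2+1$ one computes exactly $\mathcal{L}_\alpha v(x)=c_{d,\alpha}$ for a finite constant when $\alpha\in(0,2)$ (the function $|x|^2$ has a bounded nonlocal image because the second difference is the constant $2|z|^2$ after the gradient correction, and $\int_{\{|z|\leqslant1\}}|z|^2|z|^{-d-\alpha}\d z<\infty$ for $\alpha<2$), and the drift part is $\langle Ax,2x\rangle=2\langle Ax,x\rangle\leqslant -2\lambda|x|^2$ for some $\lambda>0$ by negativity of the eigenvalues (after passing to an equivalent inner product if $A$ is not symmetric; alternatively use $\|\mathrm{e}^{tA}\|\leqslant C\mathrm{e}^{-\lambda t}$). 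Hence $Lv(x)\leqslant c_{d,\alpha}-2\lambda|x|^2$, which is $\leqslant 0$ once $|x|$ is large; choosing $R$ large enough gives (b) with $d=\sup_{x\in H}Lv(x)<\infty$. For $u(x)=\log(|x|^2+1)$, the drift part $\langle Ax,\nabla u(x)\rangle = 2\langle Ax,x\rangle/(|x|^2+1)$ is bounded below (it tends to a finite limit, $\geqslant -2\|A\|$ say), and the nonlocal part $\mathcal{L}_\alpha u(x)$ is bounded (uniformly in $x$): near $z=0$ it is controlled by $\sup\|D^2 u\|\int_{\{|z|\leqslant1\}}|z|^2|z|^{-d-\alpha}\d z$, and on $\{|z|>1\}$ by $\int_{\{|z|>1\}}|u(x+z)-u(x)|\,|z|^{-d-\alpha}\d z$, which is finite and bounded in $x$ because $|\log(|x+z|^2+1)-\log(|x|^2+1)|\leqslant \log(1+|z|)^2+C$ grows only logarithmically. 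Thus $Lu(x)\geqslant -K$ for a constant $K$; replacing $u$ by $u/K$ (still norm-like, still in $D_w(L)$, still with $u/v\to0$) gives (a).

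The main obstacle is the non-symmetric, non-normal case: $\langle Ax,x\rangle$ need not be negative-definite merely from the spectrum of $A$, so one must either work in the inner product induced by a positive-definite $Q$ solving the Lyapunov equation $A^\top Q+QA=-I$ (replacing $|x|^2$ by $x^\top Q x$ throughout, which changes the constants but not the structure of the estimates), or instead build the Lyapunov functions from $\int_0^\infty |\mathrm{e}^{sA}x|^2\,\mathrm{e}^{-\varepsilon s}\d s$-type quantities; either way the quadratic $v$ and logarithmic $u$ must be taken relative to this adapted quadratic form. A secondary point to handle carefully is the boundary case $\alpha=2$, where the process is the classical Gaussian O.--U.\ process and $\mathcal{L}_\alpha=\Delta$; this case is already covered by Corollary~\ref{nd simple example}, but for a uniform statement one notes $\Delta v=2d$ and $\Delta u$ bounded, so the same computation goes through verbatim. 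Once (a), (b), (c) are in place, Theorem~\ref{inequality-type2} immediately yields non-strong ergodicity.
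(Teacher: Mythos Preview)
Your overall strategy---apply Theorem~\ref{inequality-type2} with a logarithmic $u$ and a power-type $v$---is the same as the paper's, but your choice $v(x)=|x|^2+1$ does not work. The claim that $\mathcal{L}_\alpha v(x)=c_{d,\alpha}$ is a finite constant is false for $\alpha\in(0,2)$: on $\{|z|>1\}$ the gradient correction is absent, so the integrand is $v(x+z)-v(x)=2\langle x,z\rangle+|z|^2$, and while the cross term vanishes by symmetry, the remaining contribution $\int_{\{|z|>1\}}|z|^2\,|z|^{-d-\alpha}\,\d z$ diverges whenever $\alpha<2$. In other words, quadratic growth is too fast against an $\alpha$-stable L\'evy measure; $v\notin D_w(L)$ (your own integrability check ``quadratic growth against $|z|^{-d-\alpha}$ on $\{|z|>1\}$'' should have flagged this). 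The paper repairs this by taking $v(x)=|x|^{\theta}$ with $\theta\in(0,1\wedge\alpha)$, for which $\mathcal{L}_\alpha v$ is finite and the drift term $\theta|x|^{\theta-2}\langle Ax,x\rangle$ still dominates; the fact that $Lv\leqslant d\,\mathbf{1}_{B(0,r_0)}$ is then quoted from \cite{wj2013}. Your Lyapunov-equation remark about non-symmetric $A$ is well taken and is needed here as well.

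Your treatment of $u$, by contrast, is sound and in one respect cleaner than the paper's. The inequality $|\log(|x+z|^2+1)-\log(|x|^2+1)|\leqslant \log 2+\log(1+|z|^2)$ (which follows from $|x+z|^2+1\leqslant 2(|x|^2+1)(|z|^2+1)$ and its symmetric counterpart) gives a uniform bound on the large-$z$ part of $\mathcal{L}_\alpha u$ directly, so that $Lu\geqslant -K$ for some constant $K$ with no case analysis in $\alpha$. The paper instead uses $u(x)=\log(|x|+1)$ and carries out a rather delicate computation, splitting into the three regimes $\alpha\in(1,2)$, $\alpha=1$, $\alpha\in(0,1)$ and invoking Gauss hypergeometric identities in the last case to show the tail contribution actually tends to $0$. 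Both arguments reach the same conclusion; yours is shorter once $v$ is fixed as above.
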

\begin{proof}
Let $u(x)=\log (|x|+1)$ and $v(x)=|x|^{\theta}$, where $\theta\in (0,1\wedge \alpha)$. It's easy to check that  $u(x), \ v(x)\in D_w(L)$ and  $\lim\limits_{|x|\rightarrow\infty}{u(x)}/{v(x)}=0$. Moreover, there exists $r_0>0$ such that $Lv(x)\leqslant\mathbf{1}_{B(0,r_0)}$. See \cite{wj2013}.

To apply Theorem \ref{inequality-type2}, we need only prove that there exists $C(d,\alpha)$ only depend on $d,\alpha,$ such that
$$Lu(x)\geqslant -C(d,\alpha), \ \text{for}\ |x| \ \text{large enough}.$$

First, we estimate the drift coefficient
\begin{equation}\label{drift}
\<\nabla u(x),Ax\>=\frac{\<x,Ax\>}{|x|(|x|+1)}\geqslant -\left(\sum_{i,j=1}^{d}a_{ij}^2\right)^{\frac{1}{2}}.
\end{equation}

Next we turn to estimate the fractional Laplacian for $|x|$ large enough.
\[
\begin{split}
-(-\Delta)^{\alpha/2}u(x)=&\int_{\mathbb{R}^d\setminus\{0\}}\left(u(x+z)-u(x)-\nabla u(x)\cdot z\mathbf{1}_{\{|z|\leqslant 1\}}\right)\frac{C_{d,\alpha}}{|z|^{d+\alpha}} \d z\\
\geqslant&\int_{\{|z|\leqslant 1\}}\left(u(x+z)-u(x)-\nabla u(x)\cdot z\right)\frac{C_{d,\alpha}}{|z|^{d+\alpha}} \d z\\
&+\int_{\{|z|>1 \}}(u(x+z)-u(x))\frac{C_{d,\alpha}}{|z|^{d+\alpha}} \d z\\
=:&A(x)+B(x).
\end{split}
\]
For $|x|> 1,$
\[
\begin{split}
A(x)&=\frac{1}{2}\int_{\{|z|\leqslant 1\}}\left[z^{\mathrm{T}}\mathrm{Hess}(u(x+\theta z))z\right]\frac{C_{d,\alpha}}{|z|^{d+\alpha}} \d z\\
&\geqslant \frac{1}{2} C_{d,\alpha}\left(\frac{1}{(|x|+2)(|x|+1)}-\frac{2}{(|x|-1)^2}-\frac{1}{(|x|-1)^3}\right)\int_{\{|z|\leqslant 1\}}\frac{1}{|z|^{d+\alpha-2}}\d z\\
&\rightarrow 0 \ \text{as} \ |x|\rightarrow\infty.
\end{split}
\]

The calculation of $B(x)$ is complicated, and is divided in three cases: $\alpha\in (1,2)$, $\alpha=1$ and $\alpha\in (0,1)$.

\textbf{Case 1:} $\mathbf{\alpha\in (1,2)}.$
Using Taylor's formula to $u(x),$ we have
\[
\begin{split}
B(x)&=\int_{\{|z|\geqslant1 \}}(u(x+z)-u(x))\frac{C_{d,\alpha}}{|z|^{d+\alpha}} \d z\\
&=\int_{\{|z|\geqslant1 \}}\frac{\<x+\theta z,z\>}{(|x+\theta z|+1)(|x+\theta z|)}\frac{C_{d,\alpha}}{|z|^{d+\alpha}} \d z\\
&\geqslant -\int_{\{|z|\geqslant1 \}}\frac{1}{|x+\theta z|+1}\frac{C_{d,\alpha}}{|z|^{d+\alpha-1}} \d z\\
&\geqslant-C(d)\int_{1}^{\infty}\frac{1}{r^{\alpha}}\d r=-\frac{C(d)}{\alpha-1}.
\end{split}
\]
So there exists $R_1$ large enough such that $-(-\Delta)^{\alpha/2}u(x)\geqslant-\frac{2C(d)}{\alpha-1} \ \text{for}\ |x|>R_1.$

\textbf{Case 2:} $\mathbf{\alpha=1}.$ Using integration by parts formula,
\[
\begin{split}
B(x)&=\int_{\{1<|z|<|x| \}}(u(x+z)-u(x))\frac{C_{d,\alpha}}{|z|^{d+\alpha}} \d z+\int_{\{|z|\geqslant|x| \}}(u(x+z)-u(x))\frac{C_{d,\alpha}}{|z|^{d+\alpha}} \d z\\
&=C(d)\int_1^{|x|}\log\left(1-\frac{r}{|x|+1}\right)\frac{1}{r^2} \d r+C(d)\int_{|x|}^{\infty}\log\left(\frac{(r-|x|+1)}{|x|+1}\right)\frac{1}{r^2} \d r\\
&=C(d)\left[\frac{1}{|x|}\log|x|-\frac{2\log|x|}{|x|+1}-\frac{1}{|x|}\log|x|+\frac{\log|x|}{|x|-1}\right]\\
&\rightarrow 0 \ \text{as}\  |x|\rightarrow\infty.
\end{split}
\]
Thus there exists $R_2$ large enough such that $-(-\Delta)^{\alpha/2}u(x)\geqslant-1$, $\text{for}\ |x|>R_2.$

\textbf{Case 3:} $\mathbf{\alpha\in (0,1)}.$ First we divide $B(x)$ in three parts:
\[
\begin{split}
B(x)=&\int_{\{1<|z|<|x|-1 \}}(u(x+z)-u(x))\frac{C_{d,\alpha}}{|z|^{d+\alpha}} \d z\\
&+\int_{\{|x|-1\leqslant|z|\leqslant|x|+1 \}}(u(x+z)-u(x))\frac{C_{d,\alpha}}{|z|^{d+\alpha}} \d z\\
&+\int_{\{|z|>|x|+1 \}}(u(x+z)-u(x))\frac{C_{d,\alpha}}{|z|^{d+\alpha}} \d z\\
:=&I_1(x)+I_2(x)+I_3(x).
\end{split}
\]
We estimate $I_1, I_2$ and $I_3$ one by one. First, we have
\[
\begin{split}
I_1&=\int_{\{1<|z|\leqslant |x|-1\}}\big(\log(|x+z|+1)-\log(|x|+1)\big)\frac{C_{d,\alpha}}{|z|^{d+\alpha}} \d z\\
&\geqslant \int_{\{1<|z|\leqslant |x|-1\}}\log\left(\frac{(|x|-|z|+1)}{|x|+1}\right)\frac{C_{d,\alpha}}{|z|^{d+\alpha}} \d z\\
&= C(d)\int_1^{|x|-1}\log\left(1-\frac{r}{|x|+1}\right)\frac{1}{r^{1+\alpha}} \d r\\
&\geqslant\frac{C(d)}{\alpha}\left[\log\left(\frac{|x|}{|x|+1}\right)-\frac{1}{(|x|-1)^{\alpha}}\log\left(\frac{2}{|x|+1}\right)-\int_0^{|x|-1}\frac{1}{r^{\alpha}}\frac{1}{|x|+1-r}\d r\right].\\
\end{split}
\]
To estimate the last integral above, we use the integral representation of Gauss hypergeometric function $F(a,b,c,z)$ (see \cite[15.3.1]{Abramowitz84}):
\begin{equation}\label{integral hypergeometric function}
F(a,b,c,z)=\frac{\Gamma(c)}{\Gamma(b)\Gamma(c-b)}\int_{0}^{1}t^{b-1}(1-t)^{c-b-1}(1-tz)^{-a}\d t, \ \mathrm{Re}(c)>\mathrm{Re}(b)>0.
\end{equation}
By using (\ref{integral hypergeometric function}), we have
\[
\begin{split}
-\int_0^{|x|-1}\frac{1}{r^{\alpha}}\frac{1}{|x|+1-r}\d r&=-\frac{1}{(1-\alpha)(|x|-1)^{\alpha-1}(|x|+1)}F\left(1,1-\alpha,2-\alpha,\frac{|x|-1}{1+|x|}\right)\\
&=:J_1(x).
\end{split}
\]
Similarly,
\[
\begin{split}
I_3(x)&=\int_{\{|z|\geqslant |x|+1\}}\big(\log(|x+z|+1)-\log(|x|+1)\big)\frac{C_{d,\alpha}}{|z|^{d+\alpha}} \d z\\
&\geqslant \int_{\{|z|\geqslant |x|+1\}}\log\left(\frac{(|z|-|x|+1)}{|x|+1}\right)\frac{C_{d,\alpha}}{|z|^{d+\alpha}} \d z\\
&= C(d)\int_{|x|+1}^{\infty}\log\left(\frac{(r-|x|+1)}{|x|+1}\right)\frac{1}{r^{1+\alpha}} \d r\\
&=\frac{C(d)}{\alpha(|x|+1)^{\alpha}}\log\left(\frac{2}{|x|+1}\right)+\frac{C(d)}{\alpha}\int_{|x|+1}^{\infty}\frac{1}{r^{\alpha}}\frac{1}{1-|x|+r}\d r\\
\end{split}
\]
and
\[
\begin{split}
\int_{|x|+1}^{\infty}\frac{1}{r^{\alpha}}\frac{1}{1-|x|+r}\d r&=\frac{1}{\alpha(|x|+1)^{\alpha}}F\left(1,\alpha,1+\alpha,\frac{|x|-1}{|x|+1}\right)=:J_2(x).\\
\end{split}
\]
Next, we calculate
\[
\begin{split}
J_1(x)+J_2(x)=&\frac{F\left(1,1-\alpha,2-\alpha,\frac{|x|-1}{1+|x|}\right)}{(\alpha-1)(|x|-1)^{\alpha-1}(|x|+1)}+\frac{F\left(1,\alpha,1+\alpha,\frac{|x|-1}{|x|+1}\right)}{\alpha(|x|+1)^{\alpha}}\\
=&\left[\frac{F\left(1,1-\alpha,2-\alpha,\frac{|x|-1}{1+|x|}\right)}{(\alpha-1)(|x|-1)^{\alpha-1}(|x|+1)}-\frac{F\left(1,1-\alpha,2-\alpha,\frac{|x|-1}{1+|x|}\right)}{(\alpha-1)(|x|+1)^{\alpha}}\right]\\
&+\left[\frac{F\left(1,1-\alpha,2-\alpha,\frac{|x|-1}{1+|x|}\right)}{(\alpha-1)(|x|+1)^{\alpha}}+\frac{F\left(1,\alpha,1+\alpha,\frac{|x|-1}{|x|+1}\right)}{\alpha(|x|+1)^{\alpha}}\right]\\
=:&K_1(x)+K_2(x).
\end{split}
\]
According to \cite[(3.18)--(3.21)]{sandric2013},
$$\lim\limits_{|x|\rightarrow\infty}(1+|x|)^{\alpha}(K_1(x)+K_2(x))=\pi \cot \left(\pi\alpha/2\right).$$
Hence
\[
\begin{split}
I_1(x)+I_3(x)\geqslant&\frac{C(d)}{\alpha}\left[\log\left(\frac{|x|}{|x|+1}\right)+\frac{1}{(|x|-1)^{\alpha}}\log\left(\frac{2}{|x|+1}\right)\right]\\
&+\frac{C(d)}{\alpha(|x|+1)^{\alpha}}\log\left(\frac{2}{|x|+1}\right)+\frac{C(d)}{\alpha}(K_1(x)+K_2(x))\\
\rightarrow &0 \ \text{as}\  |x|\rightarrow\infty.
\end{split}
\]
Next, we consider $I_2(x). $ Since
\[
\begin{split}
\int_{|x|-1}^{|x|+1}\log\left(\frac{|r-|x||+1}{|x|+1}\right)\frac{1}{r^{1+\alpha}} \d r&\geqslant \frac{\log(|x|+1)}{\alpha}\left[\frac{1}{(|x|+1)^{\alpha}}-\frac{1}{(|x|-1)^{\alpha}}\right]  \\
  &\rightarrow 0 \ \text{as}\  |x|\rightarrow\infty,
\end{split}
\]
we have $\lim\limits_{|x|\rightarrow\infty}I_2(x)=0.$
Thus for $\alpha\in (0,1)$,
$$\lim\limits_{|x|\rightarrow\infty}-(-\Delta)^{\alpha/2}u(x)\geqslant \lim\limits_{|x|\rightarrow\infty}(I_1(x)+I_2(x)+I_3(x))=0.$$
Therefore, we choose $R_3>1$ large enough such that $-(-\Delta)^{\alpha/2}u(x)\geqslant-1$ for any $|x|>R_3.$

Finally, we obtain that for each $\alpha\in (0,2)$, there exists a positive number $R_3$ large enough, such that $-(-\Delta)^{\alpha/2}u(x)\geqslant-c, $ where $c$ is a positive number.

Therefore, by combining (\ref{drift}) and Case 1--3, we prove that there exists $C(d,\alpha)$ only depending on $d,\alpha,$ such that $Lu(x)\geqslant -C(d,\alpha), \ \text{for}\ |x| \ \text{large enough}.$
\end{proof}

\begin{corollary}\label{nd simple atable example}
Consider the following stochastic differential equation driven by $\alpha$-stable noise on $\mathbb{R}^d$:
$$\d X_t = b(X_t) \d t + \d Z_t, \  X_0 = x_0,$$
where $b(x)=-x|x|^{\delta}, \ \delta\geqslant 0$. If $\delta>0$, \cite[Example 1.2]{wj2013} has proved the process is strongly ergodic. If $\delta=0,$ then by Theorem \ref{stable}, the process is non-strongly ergodic. Thus the process is strongly ergodic if and only if $\delta>0$.
\end{corollary}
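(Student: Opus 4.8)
The plan is to split the equivalence into its two halves and settle each by citing a result already in hand; the corollary is an immediate consequence of Theorem \ref{stable} together with a known strong-ergodicity result, so no new estimate is required.

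For the direction $\delta>0\Rightarrow$ strongly ergodic, I would simply invoke \cite[Example 1.2]{wj2013}, which already establishes strong ergodicity of the SDE $\d X_t=b(X_t)\,\d t+\d Z_t$ with $b(x)=-x|x|^{\delta}$ whenever $\delta>0$. Nothing further is needed for this half.

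For the case $\delta=0$, the decisive observation is that the drift degenerates: $b(x)=-x|x|^{0}=-x$, so the equation reduces to $\d X_t=-X_t\,\d t+\d Z_t$. This is exactly the $d$-dimensional Ornstein-Uhlenbeck process driven by symmetric $\alpha$-stable noise with coefficient matrix $A=-I_d$. Since every eigenvalue of $-I_d$ equals $-1$ and hence has negative real part, \cite[Theorem 3]{wj2012} guarantees (exponential) ergodicity, placing the process squarely within the hypotheses under which Theorem \ref{stable} was proved. Applying Theorem \ref{stable} directly then yields non-strong ergodicity, with no need to reproduce the fractional-Laplacian estimates used in its proof.

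Combining the two cases gives the stated dichotomy: the process is strongly ergodic precisely when $\delta>0$. The only point that deserves explicit mention is the identification of the $\delta=0$ nonlinear drift with the linear Ornstein-Uhlenbeck drift $A=-I_d$; once this collapse is noted, Theorem \ref{stable} applies verbatim and the corollary follows at once, so this identification is the sole (and minor) obstacle.
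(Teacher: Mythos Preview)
Your proposal is correct and follows exactly the paper's own argument: the corollary is stated with its justification embedded, citing \cite[Example 1.2]{wj2013} for $\delta>0$ and invoking Theorem \ref{stable} for $\delta=0$ after recognizing that $b(x)=-x$ gives the Ornstein--Uhlenbeck case $A=-I_d$. Your additional remark that the eigenvalue condition of \cite[Theorem 3]{wj2012} is satisfied is a welcome clarification but not a departure from the paper's reasoning.
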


\begin{remark}
According to Corollary \ref{nd simple atable example} and Example \ref{nd simple example}, we know that for SDE driven by symmetric $\alpha$-stable process ($\alpha\in (0,2]$) with polynomial drift $b(x)=-x|x|^{\delta}$, the strong ergodicity is independent of $\alpha$.
\end{remark}





\acks
The authors  thank reviewers for their valuable comments and helpful references on the first version of the paper.
The authors also would like to thank Professor Mu-Fa Chen for his valuable suggestions. This work is supported in part by the National Natural Science Foundation of China (Grant No.11771046 and No.11771047), the project from the Ministry of Education in China.


%
%
%
%

\end{document}